\newcommand{\CRC}{\overline{\mathfrak{C}\mathfrak{B}_d(n)}}
\newcommand{\CRCo}{\mathfrak{C}\mathfrak{B}_d(n)}
\title{von Neumann's inequality for row contractive matrix tuples}
\author{Michael Hartz}
\address{Fachrichtung Mathematik, Universit\"at des Saarlandes, 66123 Saarbr\"ucken, Germany}
\email{hartz@math.uni-sb.de}
\author{Stefan Richter}
\address{Department of Mathematics, University of Tennessee, 1403 Circle Drive, Knoxville, TN 37996-1320, USA}
\email{srichter@utk.edu}
\author{Orr Moshe Shalit}
\address{Faculty of Mathematics\\
Technion - Israel Institute of Technology\\
Haifa\; 3200003\\
Israel}
\email{oshalit@technion.ac.il \vspace{-2ex}}
{}
\thanks{The work of M.H.\ is partially supported by a GIF grant. The work of O.M. Shalit is partially supported by ISF Grants no. 195/16 and 431/20.}
\date{\today}
\address{}
\email{}
\subjclass[2010]{47A13, 46E22, 47L55}
\keywords{Von Neumann type inequality, noncommutative function theory, Gleason's problem}
\begin{document}

\begin{abstract}
We prove that for all $n\in \bN$, there exists a constant $C_{n}$ such that for all $d \in \bN$, for every row contraction $T$ consisting of $d$ commuting $n \times n$ matrices and every polynomial $p$, the following inequality holds:
\[
 \|p(T)\| \le C_{n} \sup_{z \in \bB_d} |p(z)| .
\]
We apply this result and the considerations involved in the proof to several open problems from the pertinent literature. 
First, we show that Gleason's problem cannot be solved contractively in $H^\infty(\mathbb{B}_d)$ for $d \ge 2$.
Second, we prove that the multiplier algebra $\Mult(\mathcal{D}_a(\mathbb{B}_d))$ of the weighted Dirichlet space $\mathcal{D}_a(\mathbb{B}_d)$ on the ball is not topologically subhomogeneous when $d \ge 2$ and $a \in (0,d)$. 
In fact, we determine all the bounded finite dimensional representations of the norm closed subalgebra $A(\mathcal{D}_a(\mathbb{B}_d))$ of $\Mult(\mathcal{D}_a(\mathbb{B}_d))$ generated by polynomials. 
Lastly, we also show that there exists a uniformly bounded nc holomorphic function on the free commutative ball $\mathfrak{C}\mathfrak{B}_d$ that is levelwise uniformly continuous but not globally uniformly continuous.
\end{abstract}

\maketitle

\section{Introduction}
Recall that a tuple $T = (T_1, \ldots, T_d) \in B(H)^d$ of operators on a Hilbert space $H$ is said to be a row contraction if $\|T\| = \|\sum_{i=1}^d T_i T_i^*\|^{1/2} \leq 1$. 
We say that $T$ is a strict row contraction if $\|T\|<1$. 
If in addition $T_i T_j  = T_jT_i $ for all $i,j$ then we say that $T$ is a commuting row contraction. 

The central result of this paper is the following theorem, which answers Question 9.15 in \cite{SalShaSha18}.

\begin{thm}
  \label{thm:main}
  Let $n \in \mathbb{N}$.
  There exists a constant $C_{n}$ such that for all $d \in \mathbb{N}$,
  for every commuting row contraction $T=(T_1,\ldots,T_d)$ on a Hilbert space of dimension $n$ and for every
  polynomial $p \in \mathbb{C}[z_1,\ldots,z_d]$, the inequality
  \begin{equation*}
    \|p(T)\| \le C_{n} \sup_{z \in \mathbb{B}_d} |p(z)|
  \end{equation*}
  holds.
\end{thm}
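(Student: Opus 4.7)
My plan is a two-step argument: a dimension reduction replacing the arbitrary number of variables $d$ by the fixed number $n^2$, followed by a compactness plus Schwarz--Pick analysis on the resulting compact set of commuting row contractions. For the dimension reduction, observe that since $T_1, \ldots, T_d$ all lie in the $n^2$-dimensional space $M_n$, the linear map $\mathbb{C}^d \to M_n$ sending $e_i \mapsto T_i$ has rank at most $n^2$. Choose a $d \times d$ unitary $U$ so that $S_i := \sum_j U_{ij} T_j$ satisfies $S_i = 0$ for $i > n^2$. A direct computation (using unitarity) shows $S$ is again a commuting row contraction, and setting $p_U(z) := p(Uz)$ gives $p(S) = p_U(T)$ with $\|p_U\|_{H^\infty(\mathbb{B}_d)} = \|p\|_{H^\infty(\mathbb{B}_d)}$ by unitary invariance of the ball. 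The vanishing of the last $d - n^2$ components of $S$ lets us pass to $q(z_1, \ldots, z_{n^2}) := p(z_1, \ldots, z_{n^2}, 0, \ldots, 0)$, which satisfies $\|q\|_{H^\infty(\mathbb{B}_{n^2})} \le \|p\|_{H^\infty(\mathbb{B}_d)}$ via the isometric inclusion $\mathbb{B}_{n^2} \hookrightarrow \mathbb{B}_d$. Hence it suffices to prove the theorem for $d = n^2$.

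With $d = n^2$ fixed, the set $\mathcal{K}_n \subset (M_n)^{n^2}$ of commuting row contractions is compact, so it is enough to show that $\|p(T)\| \le c(T) \|p\|_{H^\infty(\mathbb{B}_{n^2})}$ with $c(T)$ upper semicontinuous on $\mathcal{K}_n$, whence $C_n := \sup_T c(T) < \infty$. Simultaneous triangularization writes $T$ in terms of $n$ joint eigenvalues $\lambda^{(1)}, \ldots, \lambda^{(n)} \in \overline{\mathbb{B}_{n^2}}$ and a strictly upper-triangular nilpotent part; moreover, any joint eigenvalue on the sphere automatically detaches (its row of nilpotent entries must vanish), so interesting off-diagonal structure can only accumulate at interior eigenvalues. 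For strict $T$, the Taylor holomorphic functional calculus provides a finite $c(T)$ at once by Cauchy's integral formula.

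The main obstacle is uniformity when interior eigenvalues approach $\partial \mathbb{B}_{n^2}$: the naive Cauchy-type estimates for the derivative terms appearing in the upper-triangular part of $p(T)$ blow up. The essential observation that closes the argument is that the row contraction condition exactly controls these derivative terms in the intrinsic (Bergman) sense. Indeed, for a $2 \times 2$ principal block with repeated eigenvalue $\lambda \in \mathbb{B}_{n^2}$ and off-diagonal direction $c = (c_1, \ldots, c_{n^2})$, the row contraction condition is equivalent to the Bergman-length bound
\[
\frac{|c_{\mathrm{rad}}|^2}{(1 - |\lambda|^2)^2} + \frac{|c_{\mathrm{tan}}|^2}{1 - |\lambda|^2} \le 1,
\]
and the Schwarz--Pick inequality on $\mathbb{B}_{n^2}$ then yields $|D_c p(\lambda)| \le \|p\|_{H^\infty(\mathbb{B}_{n^2})}$ with constant independent of $\lambda$. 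The technical heart of the proof is extending this matching ``row contraction on a triangular block of size $k$ $\longleftrightarrow$ $k$-th order Schwarz--Pick (and, for distinct eigenvalues, multipoint/divided-difference) estimates on $\mathbb{B}_{n^2}$'' to blocks of size up to $n$, assembling the pieces into a universal constant $C_n$ depending only on $n$.
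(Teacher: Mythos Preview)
Your dimension reduction is correct and essentially the same as the paper's Lemma~\ref{lem:variable_reduction}(b); the paper only sharpens $n^2$ to $\lfloor n^2/4\rfloor+1$ via Schur's bound on commutative subalgebras of $M_n$. Your $2\times2$ computation is also correct and rather elegant: for a Jordan block at $\lambda$ with off-diagonal vector $c$, the full row-contraction condition (not just the diagonal entries of $\sum T_iT_i^*$, but the $2\times2$ determinant as well) is precisely the Kobayashi bound $\frac{|c_{\mathrm{rad}}|^2}{(1-|\lambda|^2)^2}+\frac{|c_{\mathrm{tan}}|^2}{1-|\lambda|^2}\le 1$, and the infinitesimal Schwarz--Pick lemma on the ball then yields $|D_cp(\lambda)|\le 1-|p(\lambda)|^2$, from which $\|p(T)\|\le\|p\|_\infty$ follows. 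This gives a direct geometric proof of Corollary~\ref{cor:2_by_2} in the repeated-eigenvalue case; the paper instead deduces it from the two-point norm via Lemma~\ref{lem:two_point}.

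For $n\ge3$, however, there is a genuine gap. First, the compactness framing does not help: the natural candidate $c(T)=\sup_{p}\|p(T)\|/\|p\|_\infty$ is a supremum of continuous functions of $T$ and hence \emph{lower} semicontinuous, so finiteness at each point of $\mathcal{K}_n$ does not give a global bound. You must therefore produce an explicit upper-semicontinuous majorant, and the only mechanism offered is the ``row contraction $\leftrightarrow$ higher-order Schwarz--Pick / divided-difference'' matching, which you do not carry out. This is not a minor omission: by Proposition~\ref{prop:3_2} one has $C_{2,3}>1$, so the matching cannot be exact for $n\ge3$, and it is not at all clear what replaces it to yield a \emph{finite} constant depending only on $n$. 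The entries of $p(T)$ above the diagonal involve multivariable divided differences across several (possibly coalescing) base points, weighted by products of off-diagonal entries of $T$; the constraints imposed by $\sum T_iT_i^*\le I$ on those products are considerably more intricate than in the $2\times2$ case, and no higher-order Schwarz--Pick estimate on $\mathbb{B}_d$ is quoted that would absorb them uniformly.

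The paper's second step is quite different and avoids direct estimates on $p(T)$ altogether. One argues by induction on $n$ (Lemma~\ref{lem:Schur}): triangularize $T$, move the first joint eigenvalue to $0$ by an automorphism of $\mathbb{B}_d$, compose $f$ with a disc automorphism so that $f(0)=0$, solve Gleason's problem (Lemma~\ref{lem:Gleason}) to write $f=\sum z_ih_i$, apply the inductive hypothesis to the lower-right $(n-1)\times(n-1)$ block to replace each $h_i$ by a multiplier $u_i\in\Mult(H^2_d)$ agreeing there, and observe that $g=\psi^{-1}\circ\sum z_iu_i$ satisfies $g(T)=f(T)$ with $\|g\|_{\Mult(H^2_d)}$ bounded by an explicit constant times $\|f\|_\infty$. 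The Drury--Arveson von~Neumann inequality then gives $\|f(T)\|=\|g(T)\|\le\|g\|_{\Mult(H^2_d)}$, completing the proof. Your Schwarz--Pick idea is an interesting alternative in spirit, but as written it stops exactly where the difficulty begins.
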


This result is the content of Theorem \ref{thm:vNrow} below.
For each $d,n \in \mathbb{N}$, we find an explicit upper bound for constants $C_{d,n}$ such that
\begin{equation}\label{eq:ineq}
  \|p(T)\| \le C_{d,n} \sup_{z \in \mathbb{B}_d} |p(z)|
\end{equation}
holds for every row contraction $T$ consisting of $d$ commuting $n \times n$ matrices. 
The inequality \eqref{eq:ineq} with a constant that possibly depends on $d$ is the main result of this paper. 
By applying essentially linear algebraic considerations, we will show that the best constants
$C_{d,n}$ are bounded in $d$ for fixed $n$, so that we may define $C_n = \sup_d C_{d,n}$.
However, for fixed $d \in \mathbb{N}$, the constants $C_{d,n}$
converge to infinity at a slower rate than the constants $C_{n}$.
Although it is very likely that the expression we find for general $d,n$ is a gross overestimation of the optimal constant, for the special case $n = 2$ we find the best possible constant; in fact, we show that $C_{d,2} = 1$ for all $d$; see Corollary \ref{cor:two}. 

It is well known that when $d \geq 2$ no constant can be found that will satisfy the inequality \eqref{eq:ineq} for all $n$.
To see this, recall that the supremum of $\|p(T)\|$ as $T$ ranges over all $n \times n$ row contractions and all $n$ is equal to the multiplier norm $\|p\|_{\Mult(H^2_d)}$ of $p$, considered as a multiplier on the Drury-Arveson space $H^2_d$ (see, for example, \cite[Section 11]{SalShaSha18}).
On the other hand, we have on the right hand side of the inequality the supremum norm $\|p\|_\infty := \sup_{z \in \mathbb{B}_d} |p(z)|$ of $p$ on the Euclidean unit ball $\mathbb{B}_d$ in $\mathbb{C}^d$.
The incomparability of the multiplier norm and the supremum norm was already observed by Drury \cite{Dru78}. 

The incomparability of the multiplier and supremum norms notwithstanding, one might guess that Theorem \ref{thm:main}, at least in the form of inequality \eqref{eq:ineq}, can be obtained by a straightforward application of standard techniques, since the row contractions appearing in it are restricted to act on spaces of a fixed finite dimension. 
However, we found that some new ideas are needed in order to prove the existence of the constants $C_{d,n}$.
It is worth highlighting that as a consequence of Theorem \ref{thm:main} and of the techniques used in the proof,
we obtain several results that answer other questions in the literature.
We now survey these additional results.

To simplify notation, let $\CRCo \subseteq M_n(\bC)^d$ denote the set of all strict row contractions consisting of $d$ commuting $n \times n$ matrices, and let $\CRC$ denote its closure, the set of all commuting $n \times n$ row contractions. The set
\begin{equation*}
  \mathfrak{C}\mathfrak{B}_d = \bigsqcup_{n=1}^\infty \CRCo
\end{equation*}
is called the free commutative ball.

In Section \ref{sec:small}, we make a connection between the $n$-point muliplier norm $\|f\|_{\Mult(H^2_d),n}$, defined in \cite{AHM+20a}, and Theorem \ref{thm:main}.
In Proposition \ref{prop:diagonalizable} we prove that
\begin{equation*}
\|f\|_{\Mult(H^2_d),n} = \sup \{ \|f(T)\| : T \in \CRCo \text{ diagonalizable} \}.
\end{equation*}
This is used to prove that $C_{d,2} = 1$ for all $d$. 

It is natural to wonder whether $C_{d,n} = 1$ for other values of $d,n$ (besides the well known $C_{1,n} = 1$, which is just von Neumann's inequality). 
We answer this in Proposition \ref{prop:3_2}, which shows that $C_{2,3} > 1$ and hence $C_{d,n} > 1$ whenever $d\geq 2$ and $n \geq 3$. This result is used in Section \ref{sec:Gleason} to show that Gleason's problem cannot be solved contractively in $H^\infty(\bB_d)$ for $d \geq 2$.

For $a > 0$, let $\mathcal{D}_a(\mathbb{B}_d)$ be the reproducing kernel Hilbert space (RKHS for short) on $\mathbb{B}_d$ with reproducing kernel 
\[
k(z,w) = \frac{1}{(1 - \langle z,w \rangle)^a} .
\]
If $a=1$, then $\mathcal{D}_a(\mathbb{B}_d) = H^2_d$, the Drury--Arveson space.
Section \ref{sec:VNI} is concerned with the relationship between operator theory and the multiplier algebra of $\mathcal{D}_a(\mathbb{B}_d)$. 
The main technical achievement is Lemma \ref{lem:Schur_improved}, in which we show that if $f \in H^\infty(\mathbb{B}_d)$ and $T$ is a tuple of commuting $n \times n$ matrices whose joint
  spectrum is contained in $\mathbb{B}_d$, then there exists a function $g \in \Mult(\mathcal{D}_a(\mathbb{B}_d))$ with $f(T) = g(T)$ and $\|g\|_{\Mult(\mathcal{D}_a(\mathbb{B}_d))} \le C \|f\|_\infty$, where $C$ is a constant that depends only on $n$ and $a$. 
Since the multiplier norms are well behaved with respect to the holomorphic functional calculus, this lemma allows us to control the norm of $f(T)$ in terms of $\|f\|_\infty$. 
An immediate consequence of this result is Theorem \ref{thm:vNrow}, which is a refined version of Theorem \ref{thm:main}.

In Section \ref{sec:reps} we employ the tools from Section \ref{sec:VNI} to study the representation theory of the multiplier algebras $\Mult(\mathcal{D}_a(\mathbb{B}_d))$ and of their norm closed subalgebras $A(\mathcal{D}_a(\mathbb{B}_d))$ generated by the polynomials.
We give a complete description of the bounded finite dimensional representations of $A(\mathcal{D}_a(\mathbb{B}_d))$, and we also show that the algebras $A(\mathcal{D}_a(\mathbb{B}_d))$, and hence $\Mult(\mathcal{D}_a(\mathbb{B}_d))$, are not topologically subhomogeneous for $a<0<d$, thereby solving an open problem from \cite{AHM+20a}. 

We conclude this paper by solving an open problem from \cite{SalShaSha18}: we show, in Proposition \ref{prop:lw_uniform}, that there exists a function $f \in \Mult(H^2_d)$ that gives rise to a noncommutative function on the closed free commutative unit ball $\sqcup_{n=1}^\infty \CRC$ that is levelwise uniformly continuous, but not globally uniformly continuous (see Section \ref{sec:application} for details).
It might be interesting to note that the question behind Theorem \ref{thm:main} grew out of an earlier attempt to settle this problem on uniform continuity.

\vskip 10pt

\noindent{\bf Acknowledgements.} The collaboration leading to this paper was spurred by the presentation of the question behind Theorem \ref{thm:main} by one of the authors in the ``Open Problems Session'' that was held at the online conference OTWIA 2020.
We wish to thank Meric Augat, the organizer of that session, as well as the organizers of the conference. 

We are also grateful to \L ukasz Kosi\'{n}ski for helpful comments and for bringing \cite{KZ18} to our attention.
Moreover, we are grateful to the editor Mikael de la Salle for bringing to our attention a theorem of Schur, as well \cite{Mir98}, where an elementary proof can be found. This led to an improved upper bound in Lemma \ref{lem:variable_reduction}.

\section{Preliminaries on multvariable spectral theory}\label{sec:prelim}

We will require some elementary facts about the spectrum of a commuting tuple of matrices, which we now briefly review. 
For more information on joint spectra see the monograph \cite{Mueller07}. 

Recall that given a commutative unital Banach algebra $\cB$ with maximal ideal space $\Delta(\cB)$, and a $d$-tuple $a = (a_1, \ldots, a_d)\in\cB^d$, the joint spectrum of $a$ with respect to $\cB$ is the subset of $\bC^d$ defined by  
\[
\sigma_{\cB}(a) = \{(\chi(a_1), \ldots, \chi(a_d)) : \chi \in \Delta(\cB)\}.
\]
When the algebra $\cB$ is understood we simply write $\sigma(a)$.
Sometimes the joint spectrum is referred to simply as spectrum. 
This is perhaps the simplest notion of spectrum and it will suffice for our needs. 

If $T = (T_1, \ldots, T_d) \in B(H)^d$ is a tuple of commuting operators on a Hilbert space, then there is also the notion of Taylor spectrum. 
We shall not define the Taylor spectrum, but we remark that it is contained in $\sigma_{\cB}(T)$ for any commutative unital Banach algebra $\cB \subseteq B(H)$ that contains $T_1, \ldots, T_d$.
In any case, when $H$ is finite dimensional then the spectrum $\sigma(T) = \sigma_\cB(T)$ is independent of the unital commutative algebra $\cB$ that contains $T$, and is given as the set of points 
\[
\sigma(T) = \Big\{\big(\langle T_1v_i, v_i \rangle, \ldots, \langle T_dv_i, v_i \rangle\big) : i=1, \ldots, n\Big\} \subset \bC^d,
\]
where $v_1, \ldots, v_n$ is an orthonormal basis for $H$ in which $T_1, \ldots, T_d$ are jointly upper triangular. 
The above set is also equal to the Taylor spectrum as well as to the so-called Waelbroeck spectrum. The equality of all these spectra in the finite dimensional seetting is explained nicely in Section 2.1 in \cite{Coh15}.

If $T$ is a commuting row contraction,
then the joint spectrum of $T$ with respect to the unital Banach algebra generated by $T$ is contained in
the closed unit ball $\overline{\mathbb{B}}_d$. Indeed, this follows from the fact that characters
on operator algebras are automatically completely contractive.

We will also require a basic holomorphic functional calculus for commuting tuples of matrices,
which can be regarded as a very special case of the Arens--Calderon functional calculus or of the Taylor functional calculus,
see \cite[Section 30]{Mueller07}.
Explicitly, we will use that if $T$ is a tuple of commuting matrices
whose joint spectrum is contained in $\mathbb{B}_d$, then
the ordinary polynomial functional calculus
$p \mapsto p(T)$  extends to a continuous algebra homomorphism on the algebra $\mathcal{O}(\mathbb{B}_d)$ of all holomorphic functions
on $\mathbb{B}_d$; we denote the extended homomorphism by $f \mapsto f(T)$.
If $T$ is jointly diagonalizable, then $f(T)$ can simply be computed by applying $f$ to the diagonal entries of a diagonal representation of $T$.
As the general constructions of the Arens--Calderon and of the Taylor functional calculus are somewhat involved, we provide
an elementary construction that is sufficient for our needs in Theorem \ref{thm:fc_elementary} in the appendix.

\section{Small matrices and the \texorpdfstring{$n$}{n}-point norm}\label{sec:small}

First, we observe that the question behind Theorem \ref{thm:main} is closely related to the relationship between the $n$-point
multiplier norm on the Drury--Arveson space and the sup norm.
To recall the definition of the $n$-point multiplier norm, let $\mathcal{H}$
be a reproducing kernel Hilbert space of functions on $\mathbb{B}_d$.
For background on reproducing kernel Hilbert spaces, see \cite{AM02,PR16}.
For $F \subset \mathbb{B}_d$, we denote by $\mathcal{H} \big|_F$
the reproducing kernel Hilbert space on $F$ whose
reproducing kernel is the restriction of the reproducing kernel of $\mathcal{H}$ to $F \times F$.
For $n \in \mathbb{N}$ with $n \ge 1$, the \emph{$n$-point multiplier norm} of a function
$f: {\mathbb{B}_d} \to \mathbb{C}$ is defined as
\begin{equation*}
  \|f\|_{\Mult(\mathcal{H}),n} = \sup \left\{ \|f \big|_F \|_{\Mult(\mathcal{H} \big|_F)} : F \subset \mathbb{B}_d \text{ with } |F| \le n \right\}.
\end{equation*}
Clearly, the condition $|F| \le n$ can be replaced with $|F| = n$.
See \cite{AHM+20a} for background on the $n$-point muliplier norm.

\begin{prop}\label{prop:diagonalizable}
  \label{prop:n_point_diagonalizable}
Let $f$ be holomorphic in a neighborhood of $\overline{\mathbb{B}}_d$. Then
  \begin{equation*}
  \|f\|_{\Mult(H^2_d),n} = \sup \{ \|f(T)\| : T \in \CRC \text{ diagonalizable} \}.
  \end{equation*}
\end{prop}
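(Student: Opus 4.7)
My plan is to prove both inequalities by exploiting two standard facts about the Drury--Arveson space $H^2_d$: (i) Drury's von Neumann inequality, which provides a contractive $\Mult(H^2_d)$-functional calculus $g \mapsto g(T)$ for every commuting strict row contraction $T$; and (ii) the complete Pick property of $H^2_d$, which ensures that for every finite $F \subset \mathbb{B}_d$ the restriction map $\Mult(H^2_d) \to \Mult(H^2_d\big|_F)$ is a norm-attaining quotient---every function on $F$ admits an extension $g \in \Mult(H^2_d)$ with $\|g\|_{\Mult(H^2_d)} = \|f\big|_F\|_{\Mult(H^2_d\big|_F)}$.

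For $\|f\|_{\Mult(H^2_d),n} \le \sup\{\|f(T)\|\}$, I would use the standard coinvariant-subspace compressions of $M_z$. Given $F \subset \mathbb{B}_d$ with $|F|=n$, set $V_F = \operatorname{span}\{k_w : w \in F\} \subset H^2_d$ and $T_F = (P_{V_F} M_{z_i}|_{V_F})_{i=1}^d$. The identity $M_{z_i}^* k_w = \overline{w_i}\, k_w$ shows simultaneously that $V_F$ is coinvariant for $M_z$ (so $T_F \in \CRC$) and that $T_F^*$ is diagonal in the basis $\{k_w : w \in F\}$, so $T_F$ is simultaneously diagonalizable. Coinvariance further gives $f(T_F) = P_{V_F} M_f|_{V_F}$, and the identification $V_F \cong H^2_d\big|_F$ yields $\|f(T_F)\| = \|f\big|_F\|_{\Mult(H^2_d\big|_F)}$; taking the supremum over $F$ completes this direction.

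For the reverse inequality, fix a diagonalizable $T \in \CRC$ on $\mathbb{C}^n$ with joint eigenvalues $\lambda^{(1)}, \ldots, \lambda^{(n)} \in \overline{\mathbb{B}}_d$, and rescale to $T_r = rT$ for $r \in (0,1)$. Then $T_r$ is a strict row contraction with joint spectrum $F_r = \{r\lambda^{(j)}\} \subset \mathbb{B}_d$, and $f(T_r) \to f(T)$ as $r \to 1^-$ because $f$ is holomorphic on a neighborhood of $\overline{\mathbb{B}}_d$. By the complete Pick property, choose $g \in \Mult(H^2_d)$ with $g\big|_{F_r} = f\big|_{F_r}$ and $\|g\|_{\Mult(H^2_d)} = \|f\big|_{F_r}\|_{\Mult(H^2_d\big|_{F_r})}$. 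Since $T_r$ is diagonalizable with spectrum in $F_r$ and $g$ agrees with $f$ there, the functional calculus gives $g(T_r) = f(T_r)$, so Drury's inequality applied to $g$ yields
$$\|f(T_r)\| = \|g(T_r)\| \le \|g\|_{\Mult(H^2_d)} = \|f\big|_{F_r}\|_{\Mult(H^2_d\big|_{F_r})} \le \|f\|_{\Mult(H^2_d),n}.$$
Letting $r \to 1^-$ completes the argument.

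The pivotal step is the identity $g(T_r) = f(T_r)$, which depends entirely on the diagonalizability hypothesis: it is what allows $f$ to be replaced by any $\Mult(H^2_d)$-extension of $f\big|_{F_r}$, trading Drury's full $\Mult(H^2_d)$-bound for the much tighter $n$-point bound. Without diagonalizability the same reasoning would only yield $\|f(T)\| \le \|f\|_{\Mult(H^2_d)}$, making diagonalizability essential in the statement; the rescaling $T \leadsto T_r$ is a minor technicality needed to accommodate eigenvalues on $\partial\mathbb{B}_d$, where the $n$-point multiplier norm is not defined.
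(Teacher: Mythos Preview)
Your proof is correct and follows essentially the same route as the paper's: both directions use the same ingredients (the compression of $M_z$ to the span of finitely many kernel functions for ``$\le$'', and the Pick property of $H^2_d$ together with Drury's inequality for ``$\ge$''), with the reduction to strict row contractions handled by the rescaling $T \mapsto rT$ that the paper summarizes as ``an approximation argument.'' Your write-up is slightly more explicit about the approximation step and about the coinvariant-subspace picture, but the underlying argument is the same.
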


\begin{proof}
  We first show the inequality ``$\le$''.
  Given a subset $F \subset \mathbb{B}_d$ with $|F|=n$, consider the $n$-dimensional RKHS $H^2_d \big|_F$ on $F$
  and the tuple $(T_1,\ldots,T_d)$, where $T_i$ is the operator of multiplication by $z_i$ on $H^2_d \big|_F$.
  Clearly, $T$ is a commuting row contraction. Moreover, the tuple $T$ is jointly diagonalizable, as the kernel
  functions at the points in $F$ form a basis of $H^2_d \big|_F$ and are joint eigenvectors of the adjoint tuple $T^*$.
  Moreover,
  \begin{equation*}
    \|f\|_{\Mult(H^2_d|_F)} = \|f(T)\|.
  \end{equation*}
  Taking the supremum over all subsets $F$ of $\mathbb{B}_d$ with $|F| = n$, we therefore find that
  \begin{equation*}
    \|f\|_{\Mult(H^2_d),n} \le \sup \{ \|f(T)\| : T \in \CRC \text{ diagonalizable} \}.
  \end{equation*}

  Conversely, let $T \in \CRC$ be diagonalizable and suppose that $f$ is holomorphic with $\|f\|_{\Mult(H^2_d),n} \le 1$.
  We wish to show that $\|f(T)\| \le 1$. 
  An approximation argument shows that we may assume that $T \in \CRCo$.
  Let $\lambda_1,\ldots,\lambda_n \in {\mathbb{B}_d}$ be the joint eigenvalues of $T$
  and set $F = \{\lambda_1,\ldots,\lambda_n\}$.
  Since $T$ is diagonalizable, if $g$ is another holomorphic function on $\mathbb{B}_d$ that agrees with $f$ on $F$,
  then $f(T) = g(T)$.
  Now, since $\|f\|_{\Mult(H^2_d),n} \le 1$, we have $\|f \big|_F \|_{\Mult(H^2_d \big|_F)} \le 1$, so by the Pick property
  of $H^2_d$ \cite[Chapter 8]{AM02}, there exists $g \in \Mult(H^2_d)$ with $g \big|_F = f \big|_F$ and $\|g\|_{\Mult(H^2_d)} \le 1$.
  Consequently, by the von Neumann inequality for $H^2_d$ of Drury \cite{Dru78}, M\"uller-Vasilescu \cite{MV93} and Arveson \cite{Arveson98}, we find that
  \begin{equation*}
    \|f(T)\| = \|g(T)\| \le 1,
  \end{equation*}
  as desired.
\end{proof}

If $f \in \Mult(H^2_d)$, then for all $r \in (0,1)$ the function $f_r(z) := f(rz)$ is holomorphic in a neighborhood of $\ol{\bB_d}$, and 
\[
\sup_{0<r<1} \|f_r\|_{\Mult(H^2_d)} = \|f\|_{\Mult(H^2_d)},
\] 
see, e.g.\ \cite[Theorem 3.5.5]{ShalitSurvey}.
Since $\|f\|_{\Mult(H^2_d)}  = \sup_{n} \|f\|_{\Mult(H^2_d),n}$, we obtain the following corollary.
\begin{cor}\label{cor:multnorm}
For all $f \in \Mult(H^2_d)$, 
\[
\|f\|_{\Mult(H^2_d)} = \sup \{ \|f(T)\| : n \in \bN \,\, and \,\, T \in \CRCo \text{ diagonalizable} \}.
\]
\end{cor}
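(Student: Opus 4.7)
The plan is to combine Proposition \ref{prop:diagonalizable} with the two displayed facts stated just before the corollary: the dilation identity $\sup_{0<r<1} \|f_r\|_{\Mult(H^2_d)} = \|f\|_{\Mult(H^2_d)}$ and the identity $\|f\|_{\Mult(H^2_d)} = \sup_n \|f\|_{\Mult(H^2_d),n}$. Denote the right-hand side of the corollary by $M$.

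For the inequality $M \le \|f\|_{\Mult(H^2_d)}$, I would observe that every $T \in \CRCo$ is in particular a commuting row contraction on a finite-dimensional Hilbert space, so by the von Neumann inequality for the Drury--Arveson space (Drury \cite{Dru78}, M\"uller--Vasilescu \cite{MV93}, Arveson \cite{Arveson98}) applied to multipliers, one has $\|f(T)\| \le \|f\|_{\Mult(H^2_d)}$ for every such $T$, diagonalizable or not. Taking the supremum gives the desired inequality.

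For the reverse inequality, fix $r \in (0,1)$ and consider $f_r(z) = f(rz)$, which is holomorphic in a neighborhood of $\overline{\mathbb{B}_d}$. Proposition \ref{prop:diagonalizable} applies to $f_r$, so for every $n$,
\begin{equation*}
\|f_r\|_{\Mult(H^2_d),n} = \sup \bigl\{ \|f_r(T)\| : T \in \CRC \text{ diagonalizable} \bigr\}.
\end{equation*}
If $T \in \CRC$ is diagonalizable, then $rT \in \CRCo$ is also diagonalizable (its joint eigenvalues lie in $r\overline{\mathbb{B}_d} \subset \mathbb{B}_d$), and $f_r(T) = f(rT)$ by the functional calculus. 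Hence $\|f_r(T)\| = \|f(rT)\| \le M$, which yields $\|f_r\|_{\Mult(H^2_d),n} \le M$. Taking the supremum over $n$ gives $\|f_r\|_{\Mult(H^2_d)} \le M$, and then letting $r \nearrow 1$ and using the dilation identity produces $\|f\|_{\Mult(H^2_d)} \le M$.

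There is no real obstacle here; the only thing to watch carefully is the translation between evaluating the dilated multiplier $f_r$ at a diagonalizable $T \in \CRC$ and evaluating the original $f$ at the rescaled tuple $rT \in \CRCo$, and the fact that diagonalizability is preserved under scalar multiplication. Everything else is a direct assembly of the tools already stated.
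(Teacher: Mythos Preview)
Your proof is correct and follows essentially the same approach as the paper, which merely states that the corollary follows from Proposition \ref{prop:diagonalizable} together with the two displayed identities $\sup_{0<r<1}\|f_r\|_{\Mult(H^2_d)}=\|f\|_{\Mult(H^2_d)}$ and $\|f\|_{\Mult(H^2_d)}=\sup_n\|f\|_{\Mult(H^2_d),n}$; you have simply unpacked this implication in full detail.
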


The fact that 
\[
\|f\|_{\Mult(H^2_d)} = \sup \{ \|f(T)\| : n \in \bN \,\, and \,\, T \in \CRCo \}
\]
has been already observed in the literature on nc functions; see, e.g., \cite[Remark 11.3]{SalShaSha18}.
This raises the question whether 
\[
\|f\|_{\Mult(H^2_d),n} = \sup \{ \|f(T)\| : T \in \CRCo\} ?
\]
We have not been able to answer this question.

We next use Proposition \ref{prop:diagonalizable} to show that the two point norm on the Drury-Arveson space is simply the supremum norm.

\begin{lem}
  \label{lem:two_point}
  Let $F \subset \mathbb{B}_d$ with $|F| \le 2$ and let $m,n \ge 1$.
  If $f \in M_{n,m}(H^\infty(\mathbb{B}_d))$, then there exists
  $g \in M_{n,m}(\Mult(H^2_d))$ with $f \big|_F = g \big|_F$ and $\|g\|_{\Mult(H^2_d)} \le \|f\|_\infty$.
  In particular, if $n=m=1$, then $\|f\|_{\Mult(H^2_d),2} = \|f\|_\infty$.
\end{lem}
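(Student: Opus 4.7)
The plan is to reduce the two-point matricial interpolation problem on $\mathbb{B}_d$ to a one-variable problem on the disk. After normalizing $\|f\|_\infty \le 1$ and writing $F = \{\lambda_1, \lambda_2\}$, $W_i = f(\lambda_i) \in M_{n,m}$ (the case $|F| \le 1$ is trivial, taking $g$ constant), I will invoke the matricial complete Pick property of the Drury--Arveson space: such a $g \in M_{n,m}(\Mult(H^2_d))$ with $g\big|_F = f\big|_F$ and $\|g\|_{\Mult(H^2_d)} \le 1$ exists if and only if the matricial Pick matrix
\[
P = \left[\frac{I_n - W_i W_j^*}{1 - \langle \lambda_i, \lambda_j\rangle}\right]_{i,j=1,2}
\]
is positive semidefinite. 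Thus it suffices to verify $P \ge 0$.

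To do this, I will construct a complex geodesic $\phi \colon \mathbb{D} \to \mathbb{B}_d$ through $\lambda_1, \lambda_2$ of the form $\phi(\mu) = a + \mu b$, where $a$ is the orthogonal projection of $0 \in \mathbb{C}^d$ onto the complex affine line $L$ through $\lambda_1, \lambda_2$, and $b$ is a scalar multiple of $\lambda_2 - \lambda_1$ chosen so that $\|a\|^2 + \|b\|^2 = 1$. Orthogonality $\langle a, b\rangle = 0$ is automatic, and a short computation gives $\|\phi(\mu)\|^2 = \|a\|^2 + |\mu|^2 \|b\|^2$, so $\phi$ maps $\mathbb{D}$ into $\mathbb{B}_d$ and satisfies $\phi(\mu_i) = \lambda_i$ for uniquely determined $\mu_1, \mu_2 \in \mathbb{D}$. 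The key payoff is the kernel pullback identity
\[
1 - \langle \phi(\mu), \phi(\nu)\rangle = \|b\|^2 (1 - \mu \bar\nu),
\]
which expresses that the Drury--Arveson kernel pulls back along $\phi$ to a constant positive multiple of the Szeg\H{o} kernel on $\mathbb{D}$.

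Now $f \circ \phi \in M_{n,m}(H^\infty(\mathbb{D}))$ has sup-norm at most one and takes the value $W_i$ at $\mu_i$, so the classical easy direction of matricial Nevanlinna--Pick on the disk (which amounts to the positivity of the operator-valued kernel $(I - (f\circ\phi)(\mu)(f\circ\phi)(\nu)^*)/(1 - \mu \bar\nu)$) yields $P_\mathbb{D} := [(I_n - W_i W_j^*)/(1 - \mu_i \bar\mu_j)]_{i,j} \ge 0$. Multiplying by the positive scalar $\|b\|^{-2}$ and using the pullback identity gives $P = \|b\|^{-2} P_\mathbb{D} \ge 0$, completing the reduction. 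I expect the main obstacle to be arranging the complex geodesic parameterization so cleanly that the DA kernel pulls back to a scalar multiple of the Szeg\H{o} kernel (rather than just to some positive kernel, which would only yield a Schur-multiplier-type inequality); this is exactly what the orthogonal-projection choice of $a$ together with the normalization $\|a\|^2 + \|b\|^2 = 1$ accomplishes.

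Finally, the $n=m=1$ statement is immediate: $\|f\|_{\Mult(H^2_d),2} \le \|f\|_\infty$ follows from the existence of $g$ by definition of the two-point norm (take the infimum of $\|g\|_{\Mult(H^2_d)}$ over extensions), while the reverse inequality comes from one-point restrictions, whose multiplier norm equals $|f(\lambda)|$ and so saturates at $\|f\|_\infty$.
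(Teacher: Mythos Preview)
Your argument is correct and takes a genuinely different route from the paper's proof.

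The paper first normalizes $F$ by a biholomorphic automorphism of $\mathbb{B}_d$ so that $F = \{0, r e_1\}$ lies on a coordinate axis, and then \emph{explicitly} constructs $g = (f \circ i) \circ P$, where $i: \mathbb{D} \to \mathbb{B}_d$ is the inclusion of the first coordinate and $P: \mathbb{B}_d \to \mathbb{D}$ is the corresponding projection. The only nontrivial input is that $h \mapsto h \circ P$ is a complete isometry $H^\infty(\mathbb{D}) \hookrightarrow \Mult(H^2_d)$. No Pick interpolation is invoked; the interpolant is written down.

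You instead invoke the complete Pick property of $H^2_d$ to reduce the existence of $g$ to the positivity of the $2 \times 2$ block Pick matrix, and then verify that positivity by parametrizing the complex affine line through $\lambda_1,\lambda_2$ so that the Drury--Arveson kernel pulls back to a scalar multiple of the Szeg\H{o} kernel; the easy direction of matricial Nevanlinna--Pick on $\mathbb{D}$ finishes the job. Your geodesic computation $1 - \langle \phi(\mu),\phi(\nu)\rangle = \|b\|^2 (1 - \mu\bar\nu)$ is correct and is exactly the point: it is equivalent, after a unitary, to the paper's fact that $h \mapsto h \circ P$ is a complete isometry into $\Mult(H^2_d)$.

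Trade-offs: the paper's argument is more elementary and constructive (you see $g$), while yours avoids the automorphism reduction and makes the link to the two-point Pick matrix explicit, which meshes nicely with Proposition~\ref{prop:n_point_diagonalizable}. One small wording point: in your last paragraph, the inequality $\|f\|_{\Mult(H^2_d),2} \le \|f\|_\infty$ is obtained not by ``taking the infimum over extensions'' but simply because $\|f|_F\|_{\Mult(H^2_d|_F)} = \|g|_F\|_{\Mult(H^2_d|_F)} \le \|g\|_{\Mult(H^2_d)} \le \|f\|_\infty$; the definition of the $n$-point norm involves no infimum.
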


\begin{proof}
  If $|F| \le 1$, then we can choose $g$ to be a constant function, so let $|F| = 2$.
  Suppose initially that $F = \{0, r e_1\}$ for some $r \in (0,1)$.
  Let 
  \begin{equation*}
    i: \mathbb{D} \to \mathbb{B}_d, \quad z \mapsto (z,0,\ldots,0),
  \end{equation*}
  be the inclusion and let $P: \mathbb{B}_d \to \mathbb{D}$ be the projection onto the first coordinate.
  Then $f \circ i \in M_{n,m}(H^\infty(\mathbb{D}))$ has norm at most $\|f\|_\infty$.
  The map $h \mapsto h \circ P$ is a complete isometry from $H^\infty(\mathbb{D})$ into $\Mult(H^2_d)$
  (see, for example, \cite[Lemma 6.2]{AHM+20a}).
  Hence, if we define $g = (f \circ i) \circ P$, then
  $g \in M_{n,m}(\Mult(H^2_d))$
  satisfies the conclusions of the lemma.

  If $F$ is an arbitrary two point subset of $\mathbb{B}_d$, then there exists a biholomorphic automorphism $\theta$
  of $\mathbb{B}_d$ so that $\theta(F)$ has the form considered in the first paragraph; see \cite[Section 2.2]{Rudin08}.
  So the result follows from completely isometric automorphism invariance of $H^\infty(\mathbb{B}_d)$ and $\Mult(H^2_d)$; see, e.g.\ Propositions 4.1 and 4.3 in \cite{Hartz17a}.

  As for the additional statement, notice that the inequality $\|f\|_\infty \le \|f\|_{\Mult(H^2_d),2}$ always holds.
  Conversely, if $|F| = 2$, then we apply the first statement to find that
  \begin{equation*}
    \|f \big|_F\|_{\Mult(H^2_d |_F)} = \|g \big|_F\|_{\Mult(H^2_d |_F)} \le \|g\|_{\Mult(H^2_d)}
    \le \|f\|_\infty.
  \end{equation*}
  Taking the supremum over all $F$ yields $\|f\|_{\Mult(H^2_d),2} \le \|f\|_\infty$.
\end{proof}

As a consequence, we obtain a von Neumann-type inequality with constant $1$ for $2 \times 2$ row contractions.

\begin{cor}\label{cor:two}
  \label{cor:2_by_2}
  If $T$ is a commuting $2 \times 2$ row contraction, then
  \begin{equation*}
    \|p(T) \| \le \|p\|_\infty
  \end{equation*}
  for all $p \in \mathbb{C}[z_1,\ldots,z_d]$.
  In other words, we may choose $C_{2} =1$ in Theorem \ref{thm:main}.
\end{cor}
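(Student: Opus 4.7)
The plan is to reduce to the case where $T$ is jointly diagonalizable. In that case, Proposition~\ref{prop:diagonalizable} combined with Lemma~\ref{lem:two_point} immediately yields
\[
\|p(T)\| \le \|p\|_{\Mult(H^2_d),2} = \|p\|_\infty.
\]
So the task reduces to approximating an arbitrary commuting $2 \times 2$ row contraction $T$ in norm by jointly diagonalizable ones and passing to the limit, since $T \mapsto p(T)$ is continuous on $M_2(\mathbb{C})^d$. By first replacing $T$ with $rT$ and letting $r \nearrow 1$, I may moreover assume that $T$ is a strict row contraction.

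The next step is a structural observation. Simultaneously upper-triangularize $T_1, \ldots, T_d$ by a unitary change of basis. If the two diagonal entries of some $T_i$ are distinct, then $T_i$ has distinct eigenvalues and every matrix commuting with $T_i$ is diagonalized in any eigenbasis of $T_i$, so $T$ is already jointly diagonalizable. Otherwise every $T_i$ has a single repeated eigenvalue, and after a further change of basis that brings some non-scalar $T_i$ into Jordan form, the well-known commutant computation for a $2 \times 2$ Jordan block $N = \bigl(\begin{smallmatrix} 0 & 1 \\ 0 & 0 \end{smallmatrix}\bigr)$ shows that every commuting matrix lies in $\operatorname{span}\{I, N\}$. (If all $T_i$ are scalars, then $T$ is trivially jointly diagonalizable.) Hence in the non-diagonalizable situation, one has the rigid form $T_i = a_i I + b_i N$ for each $i$ in a common basis.

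To finish, I perturb $N$: set $N_\varepsilon = \bigl(\begin{smallmatrix} \varepsilon & 1 \\ 0 & 0 \end{smallmatrix}\bigr)$, whose eigenvalues $0$ and $\varepsilon$ are distinct for $\varepsilon \ne 0$, and define $T_i^\varepsilon = a_i I + b_i N_\varepsilon$. All $T_i^\varepsilon$ lie in the commutative algebra $\operatorname{span}\{I, N_\varepsilon\}$, and since $N_\varepsilon$ has distinct eigenvalues, the tuple $T^\varepsilon$ is jointly diagonalizable in the common eigenbasis of $N_\varepsilon$. Because $T^\varepsilon \to T$ in norm as $\varepsilon \to 0$, the perturbed tuple $T^\varepsilon$ remains a strict row contraction for all sufficiently small $\varepsilon \ne 0$; the diagonalizable case therefore gives $\|p(T^\varepsilon)\| \le \|p\|_\infty$, and letting $\varepsilon \to 0$ yields the claim. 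The one genuinely nontrivial step is the structural reduction to the form $T_i = a_i I + b_i N$; once this is in hand, the perturbation is natural and everything else is continuity.
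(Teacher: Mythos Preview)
Your proof is correct and follows the same approach as the paper: handle the jointly diagonalizable case via Proposition~\ref{prop:diagonalizable} and Lemma~\ref{lem:two_point}, then approximate an arbitrary commuting $2\times 2$ row contraction by jointly diagonalizable ones and pass to the limit. The only difference is that the paper cites \cite{HO01} for the approximation step, whereas you supply an explicit self-contained construction; note, incidentally, that your ``further change of basis'' is unnecessary, since after the unitary upper-triangularization each $T_i$ with a repeated diagonal entry is already of the form $a_i I + c_i N$.
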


\begin{proof}
  Suppose initially that $T$ is jointly diagonalizable. Applying Proposition \ref{prop:n_point_diagonalizable}
  and Lemma \ref{lem:two_point}, we find that
  \begin{equation}
    \label{eqn:diagonalizable}
    \|p(T)\| \le \|p\|_{\Mult(H^2_d),2} = \|p\|_\infty.
  \end{equation}

  In general, it is known that any tuple $T$ of commuting $2 \times 2$ matrices can be approximated by a sequence $(T_n)$ of commuting diagonalizable
  $2 \times 2$ matrices (see the remarks on page 133 of \cite{HO01}). The row norm of $(T_n)$ converges to the row norm of $T$,
  so by applying \eqref{eqn:diagonalizable} to $r_n T_n$ for a suitable sequence $r_n \in (0,1)$ tending to $1$, the general result follows.
\end{proof}

The following result shows that the last corollary does not extend to $3 \times 3$ matrices.

\begin{prop}
  \label{prop:3_2}
 
There exists a polynomial $p$ so that $\|p\|_{\Mult(H^2_2),3} > \|p\|_\infty$. 
In particular, there exists a pair of commuting $3 \times 3$ matrices that is a row contraction such that $\|p(T)\| > \|p\|_\infty$.
Consequently, $C_{d,n} > 1$ for all $d$ and $n$ such that $d \geq 2$ and $n \geq 3$. 
\end{prop}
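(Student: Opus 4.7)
The plan is to exhibit a polynomial $p$ and a three-point set $F = \{\lambda_1, \lambda_2, \lambda_3\} \subset \mathbb{B}_2$ with $\|p\|_\infty \le 1$ on $\mathbb{B}_2$ such that the Drury--Arveson Pick matrix
\[
\Bigl[\frac{1 - p(\lambda_i)\overline{p(\lambda_j)}}{1 - \langle \lambda_i, \lambda_j\rangle}\Bigr]_{i,j=1}^{3}
\]
fails to be positive semidefinite. This gives $\|p|_F\|_{\Mult(H^2_2|_F)} > 1 \ge \|p\|_\infty$, which is the first assertion. Proposition \ref{prop:n_point_diagonalizable} then converts this into a diagonalizable commuting row contraction $T$ of two $3\times 3$ matrices (the tuple of coordinate multipliers on the three-dimensional space $H^2_2|_F$) satisfying $\|p(T)\| > \|p\|_\infty$, which is the second assertion. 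The general case $d \ge 2$, $n \ge 3$ follows by a padding argument: view $p$ as a polynomial on $\mathbb{C}^d$ depending only on the first two variables (so $\|p\|_{\infty, \mathbb{B}_d} = \|p\|_{\infty, \mathbb{B}_2}$) and replace $T$ by the $d$-tuple on $\mathbb{C}^n$ whose first two components are $T_1 \oplus 0_{n-3}$ and $T_2 \oplus 0_{n-3}$ and whose remaining components are zero; this is still a commuting row contraction of $n \times n$ matrices and satisfies $\|p(\widetilde T)\| \ge \|p(T)\| > \|p\|_\infty$.

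The three points should not be contained in any common complex geodesic, since otherwise Lemma \ref{lem:two_point}'s lifting $H^\infty(\mathbb{D}) \hookrightarrow \Mult(H^2_d)$ combined with automorphism invariance would force equality of the Pick norms. A natural triangular choice is $\lambda_1 = 0$, $\lambda_2 = a e_1$, $\lambda_3 = a e_2$ for some $a \in (0,1)$ to be tuned. For symmetric target data $w_1 = 0$, $w_2 = w_3 = w$, a Schur-complement computation on the $3 \times 3$ Drury--Arveson Pick matrix shows that positive semidefiniteness is equivalent to $|w|^2 \le a^2/(2-a^2)$. So one is reduced to finding a polynomial $p$ with $\|p\|_\infty \le 1$, $p(\lambda_1) = 0$ and $|p(\lambda_2)| = |p(\lambda_3)|$ strictly larger than $a/\sqrt{2-a^2}$.

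The main obstacle is this explicit separation: the naive linear Ansatz $p(z) = c(z_1 + z_2)$ only reaches $|w| \le a/\sqrt{2}$, well below the Drury--Arveson threshold. One must either optimise a nonlinear polynomial by adding corrections from the ideal of polynomials vanishing on $F$, or import a three-point extremal-function result that separates the $H^\infty(\mathbb{B}_2)$-Pick class from the Schur--Agler (equivalently, Drury--Arveson multiplier) class; the acknowledgement of Kosi\'nski and the citation of \cite{KZ18} strongly suggest the latter route. If a non-polynomial interpolant $f \in H^\infty(\mathbb{B}_2)$ is obtained first, one passes to a polynomial by considering $f_r(z) = f(rz)$ for $r$ close to $1$, approximating $f_r$ uniformly on $\overline{\mathbb{B}_2}$ by polynomials, and then correcting the values at the three nodes by a small Lagrange-interpolation perturbation from the ideal vanishing on $F$.
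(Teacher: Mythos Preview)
Your overall strategy---show that a Drury--Arveson Pick matrix at three points fails to be positive semidefinite for some polynomial with $\|p\|_\infty\le 1$, then invoke Proposition~\ref{prop:n_point_diagonalizable} and pad with zeros---is exactly the paper's route. The padding argument for general $d\ge 2$, $n\ge 3$ is also fine.

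The genuine gap is that you never actually produce the polynomial and verify the strict inequality; you describe the obstacle and then defer to either an unspecified optimization or to \cite{KZ18}. The paper resolves this step in one line, with no appeal to extremal theory: take $p(z)=z_1^2+z_2^2$ (so $\|p\|_\infty=1$ since $|z_1^2+z_2^2|\le|z_1|^2+|z_2|^2$) and the three points $\bigl(\tfrac{4}{5},\tfrac{1}{5}\bigr),\ \bigl(\tfrac{1}{5},\tfrac{4}{5}\bigr),\ \bigl(\tfrac{2}{5},\tfrac{2}{5}\bigr)$; a direct computation shows the $3\times 3$ Pick matrix has negative determinant. No passage from an $H^\infty$ interpolant to a polynomial via $f_r$ is needed.

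It is also worth noting that your specific node set $\{0,ae_1,ae_2\}$ may be part of the difficulty rather than merely a matter of tuning. With $p=z_1^2+z_2^2$ your targets are $w_1=0$, $w_2=w_3=a^2$, and your own threshold $|w|^2\le a^2/(2-a^2)$ becomes $a^4(2-a^2)\le a^2$, i.e.\ $(a^2-1)^2\ge 0$, which always holds; so even the quadratic candidate does not separate at these nodes. The paper's configuration breaks this borderline behaviour by including the diagonal point $\bigl(\tfrac{2}{5},\tfrac{2}{5}\bigr)$, at which $p$ takes a different value, and then the determinant becomes strictly negative. In short, the missing idea is simply to write down a concrete quadratic polynomial together with a slightly less symmetric triple of points and compute.
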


\begin{proof}
  Let $p(z) = z_1^2 + z_2^2$, so that $\|p\|_\infty = 1$.
  One can check that the Pick matrix in $H^2_2$
  of $p$ at the points $\left(\frac{4}{5},\frac{1}{5}\right), \left(\frac{1}{5},\frac{4}{5}\right), \left(\frac{2}{5}, \frac{2}{5}\right)$ is not positive semidefinite
  (the determinant is strictly negative). Hence $\|p\|_{\Mult(H^2_2),3} > 1$. 
  The rest follows from Proposition \ref{prop:n_point_diagonalizable} and from the definitions.
\end{proof}

One possible approach to showing that $C_{d,3} < \infty$, extending the basic idea behind the proof of Lemma \ref{lem:two_point}, is to use the special structure of solutions to extremal $3$-point Pick problems on the ball
obtained by Kosi\'{n}sksi and Zwonek \cite{KZ18}.
It is conceivable that the numerical value of the constant $C_{d,3}$ could be determined in this way.
In the next section, we use a somewhat different method, which very likely does not give optimal constants,
but will yield that $C_{d,n} < \infty$ for any $d,n \ge 1$.

\section{von Neumann's inequality up to a constant}
\label{sec:VNI}

Our next goal is to prove Theorem \ref{thm:main} in general.
To this end, we use a variant of the Schur algorithm, somewhat similar to the proof of the main result in \cite{Hartz20}.

We require the solution of Gleason's problem in $H^\infty(\mathbb{B}_d)$, which we state as a lemma for easier reference.
See \cite[Section 6.6]{Rudin08} for a proof.

\begin{lem}
  \label{lem:Gleason}
  Let $d \in \mathbb{N}$.
  There exists a constant $C(d) > 0$ so that for every $f \in H^\infty(\mathbb{B}_d)$ with $f(0) = 0$, there exist $f_1,\ldots,f_d \in H^\infty(\mathbb{B}_d)$
  with $f = \sum_{i=1}^d z_i f_i$ and $\|f_i\|_\infty \le C(d) \|f\|_\infty$ for all $i$.
\end{lem}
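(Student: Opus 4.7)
The plan is to establish Lemma~\ref{lem:Gleason} via an integral representation, as in the classical treatment in Rudin's book. Fix $\alpha \geq 0$ chosen so that $N := d+1+\alpha$ is a positive integer, and start from the weighted Bergman reproducing formula
\[
f(z) = c_\alpha \int_{\mathbb{B}_d} \frac{f(w)(1-|w|^2)^\alpha}{(1-\langle z, w\rangle)^N} \, dV(w),
\]
valid for every $f$ holomorphic in a neighborhood of $\overline{\mathbb{B}}_d$, where $c_\alpha$ is the appropriate normalizing constant. A dilation and normal-families argument will allow us to reduce the general case $f \in H^\infty(\mathbb{B}_d)$ to this one.

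Evaluating this formula at $z=0$ and using $f(0) = 0$ gives $\int f(w)(1-|w|^2)^\alpha dV(w) = 0$. Subtracting this identity from the reproducing formula and then applying the elementary telescoping identity
\[
\frac{1}{(1-u)^N} - 1 = u \sum_{k=1}^N \frac{1}{(1-u)^k}
\]
with $u = \langle z, w\rangle = \sum_j z_j \bar w_j$, I would factor out the $z_i$'s to obtain the desired form
\[
f(z) = \sum_{i=1}^d z_i f_i(z), \qquad f_i(z) := c_\alpha \sum_{k=1}^N \int_{\mathbb{B}_d} \frac{f(w)\,\bar w_i\,(1-|w|^2)^\alpha}{(1-\langle z, w\rangle)^k} \, dV(w).
\]

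To bound $\|f_i\|_\infty$ by $C(d)\|f\|_\infty$ I would invoke the Forelli--Rudin estimates: for $k < N$, the integral $\int (1-|w|^2)^\alpha |1-\langle z,w\rangle|^{-k} \, dV(w)$ is bounded uniformly for $z \in \mathbb{B}_d$, which immediately disposes of the terms $k = 1, \ldots, N-1$. This reduces the whole problem to estimating the single borderline summand corresponding to $k = N$.

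The main obstacle is exactly this borderline term, for which the Forelli--Rudin estimate yields only a logarithmic divergence in $1 - |z|^2$. The key additional input is the presence of the antiholomorphic factor $\bar w_i$: using the identity
\[
\bar w_i (1-|w|^2)^\alpha = -\frac{1}{\alpha+1}\,\frac{\partial}{\partial w_i}\bigl((1-|w|^2)^{\alpha+1}\bigr)
\]
and integrating by parts, one trades the weight $(1-|w|^2)^\alpha$ for the better weight $(1-|w|^2)^{\alpha+1}$. Since $(1-\langle z,w\rangle)^{-N}$ is antiholomorphic in $w$, a naive integration by parts dumps the derivative onto $f$, which is inadmissible in $H^\infty$, so some care is needed either in combining this with further algebraic manipulation of the kernel or in choosing the initial reproducing formula (e.g.\ writing $f$ as a difference of two reproducing integrals of slightly different exponents) so that the borderline contribution cancels before differentiation occurs. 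This is precisely the delicate step on which the proof in Rudin's Section~6.6 is built.
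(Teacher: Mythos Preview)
The paper does not give its own proof of this lemma; it simply cites \cite[Section 6.6]{Rudin08}. Your sketch is precisely an outline of that same argument (as you yourself note at the end), so there is no meaningful difference in approach to report.

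That said, your sketch stops short exactly where the real work lies. The integration-by-parts idea you propose for the borderline term $k=N$ does not, on its own, close the gap: after moving $\partial/\partial w_i$ onto $f$, the Cauchy estimate $|\partial_i f(w)| \lesssim \|f\|_\infty (1-|w|^2)^{-1}$ eats the extra factor of $(1-|w|^2)$ you gained, and you are back at the same borderline integral. Rudin's actual argument avoids this loop by a more careful choice of kernel (essentially the Ahern--Schneider construction), not by a straightforward integration by parts against $(1-|w|^2)^{\alpha+1}$. So if you intend to write out a self-contained proof rather than cite Rudin, that step needs to be done properly; your current formulation of it would not go through as written.
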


Our arguments do not just apply to the Drury--Arveson space, but
to standard weighted spaces on the ball.
For $a > 0$, let $\mathcal{D}_a(\mathbb{B}_d)$ be the RKHS on $\mathbb{B}_d$ with reproducing kernel
\begin{equation*}
  \frac{1}{(1 - \langle z,w \rangle)^a}.
\end{equation*}

\begin{lem}
  \label{lem:norm_a}
  Let $a > 0$. The coordinate functions $z_i$ are multipliers of $\mathcal{D}_a(\mathbb{B}_d)$ and
  \begin{equation*}
    \|
    \begin{bmatrix}
      z_1 & \cdots & z_d
    \end{bmatrix}\|_{\Mult(\mathcal{D}(\mathbb{B}_d) \otimes \mathbb{C}^d, \mathcal{D}_a(\mathbb{B}_d))} =
    \max(1, a^{-1/2}).
  \end{equation*}
\end{lem}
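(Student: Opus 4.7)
The plan is to translate the row multiplier norm into a kernel positivity condition and then analyze the Taylor coefficients of the resulting kernel.

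First, I would invoke the standard fact (see, e.g., \cite{AM02}) that for an RKHS $\mathcal{H}$ on $\mathbb{B}_d$ with reproducing kernel $K$, the row multiplier norm of $[z_1, \ldots, z_d]$ acting from $\mathcal{H} \otimes \mathbb{C}^d$ into $\mathcal{H}$ equals the smallest $c \geq 0$ such that
\[
\bigl(c^2 - \langle z,w \rangle\bigr) K(z,w)
\]
is a positive semidefinite kernel on $\mathbb{B}_d$. Applied with $K(z,w) = (1-\langle z,w\rangle)^{-a}$, this reduces the lemma to determining the smallest $c \geq 0$ making $(c^2 - \langle z,w\rangle)(1-\langle z,w\rangle)^{-a}$ positive.

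Second, since this kernel depends only on $t = \langle z,w \rangle$, I would use $(1-t)^{-a} = \sum_{n \geq 0} \binom{a+n-1}{n} t^n$ to expand
\[
(c^2 - t)(1-t)^{-a} = c^2 + \sum_{n \geq 1} \left( c^2 \binom{a+n-1}{n} - \binom{a+n-2}{n-1}\right) t^n.
\]
Each monomial $\langle z,w \rangle^n$ is a positive kernel on $\mathbb{B}_d$, so non-negativity of every Taylor coefficient is sufficient. For the converse, restricting $z, w$ to the first coordinate axis yields a kernel of the form $\sum_n d_n z_1^n \bar w_1^n$ on $\mathbb{D}$; positivity of the Pick matrices at points equally spaced around $0$, diagonalized by the discrete Fourier transform, forces each $d_n \geq 0$.

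Third, writing $\binom{a+n-1}{n} = a(a+1)\cdots(a+n-1)/n!$, a routine simplification gives
\[
\frac{\binom{a+n-2}{n-1}}{\binom{a+n-1}{n}} = \frac{n}{a+n-1},
\]
so the condition $d_n \geq 0$ for every $n \geq 1$ is equivalent to $c^2 \geq \sup_{n \geq 1} \tfrac{n}{a+n-1}$. The sequence $n/(a+n-1)$ is increasing to $1$ when $a \geq 1$ and decreasing from the value $1/a$ at $n=1$ when $0 < a < 1$, so the supremum equals $\max(1, 1/a)$, yielding $c = \max(1, a^{-1/2})$.

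The only step requiring some attention is the necessity of non-negativity of the Taylor coefficients; the rest is a routine binomial computation and a one-line monotonicity check.
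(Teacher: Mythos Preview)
Your proof is correct and follows essentially the same route as the paper: both reduce to the positivity of the kernel $(c^{2}-\langle z,w\rangle)(1-\langle z,w\rangle)^{-a}$, expand in powers of $\langle z,w\rangle$, argue that positivity is equivalent to non-negativity of each Taylor coefficient, and then perform the same monotonicity analysis (the paper's inequality $c^{2}\le (a+n)/(n+1)$ for $n\ge 0$ is exactly the reciprocal of your $c^{2}\ge n/(a+n-1)$ for $n\ge 1$ after reindexing). The only difference is that the paper cites \cite[Corollary 6.3]{Hartz17a} for the necessity of non-negative coefficients, whereas you sketch a direct argument via circulant Pick matrices; your sketch is fine here since the coefficients grow only polynomially, so the tail terms in each eigenvalue are dominated for large $N$.
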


\begin{proof}
  Let $c > 0$. We have to show that the row
  \begin{equation*}
    \begin{bmatrix}
      c z_1 & \ldots c z_d
    \end{bmatrix}
  \end{equation*}
  is a contractive multiplier of $\mathcal{D}_a(\mathbb{B}_d)$ if and only if $c^2 \le \min(1,a)$.
  To this end, a standard result about multipliers (see, e.g.\ \cite[Theorem 6.28]{PR16}) shows that the row is a contractive multiplier if and only if
  the Hermitian kernel $L$ defined by
  \begin{equation*}
    L(z,w) = \frac{1 - c^2 \langle z ,w \rangle}{ (1- \langle z,w \rangle)^a}
  \end{equation*}
  is positive.
  But
  \begin{equation*}
    L(z,w) = (1 - c^2 \langle z,w \rangle ) \sum_{n=0}^\infty (-1)^n \binom{-a}{n} \langle z,w \rangle^n,
  \end{equation*}
  which is positive  if and only if every coefficient of $\langle z,w \rangle^n$ is non-negative
  (see, e.g.\ \cite[Corollary 6.3]{Hartz17a}), which happens if and only if
  \begin{equation*}
    (-1)^{n+1} \binom{-a}{n+1} - c^2 (-1)^n \binom{-a}{n} \ge 0
  \end{equation*}
  for all $n \ge 0$. Since $\binom{-a}{n+1} = \frac{-a-n}{n+1} \binom{-a}{n}$, this happens if and only if
  \begin{equation}
    \label{eqn:norm_ineq}
    c^2 \le \frac{a+n}{n+1} \quad \text{ for all } n \ge 0.
  \end{equation}
  If $0 < a \le 1$, then the function $t \mapsto \frac{a + t}{t+1}$ is increasing, so \eqref{eqn:norm_ineq} holds
  if and only if it holds for $n=0$, that is, if and only if $c^2 \le a$. If $a \ge 1$, then
  the right-hand side of \eqref{eqn:norm_ineq} is at least $1$ and tends to $1$ as $n \to \infty$,
  so \eqref{eqn:norm_ineq} holds if and only if $c^2 \le 1$.
\end{proof}

\begin{rem}
  For $a \ge 1$, Lemma \ref{lem:norm_a} can be easily deduced from the fact that the coordinate
  functions form a row contraction on $H^2_d$. Indeed, since the kernel of $H^2_d$
  is a factor of the kernel of $\mathcal{D}_a(\mathbb{B}_d)$ for $a \ge 1$, the Schur product theorem easily implies
  that the row of the coordinate functions has multiplier norm at most $1$ on  $\mathcal{D}_a(\mathbb{B}_d)$ for $a \ge 1$.
  On the other hand, the row of the coordinate functions has supremum norm $1$, so the multiplier
  norm has to be equal to $1$.
\end{rem}

Unfortunately, it is in general not true that commuting diagonalizable matrices are dense in the set of commuting matrices,
so consideration of the $n$-point norm alone is not sufficient.
Instead, we will work directly with the $n \times n$ matrices.
The following result is the key lemma in the proof of Theorem \ref{thm:main} and some of the later results.

\begin{lem}
  \label{lem:Schur}
  Let $f \in H^\infty(\mathbb{B}_d)$, let $T$ be a $d$-tuple of commuting $n \times n$ matrices whose joint
  spectrum is contained in $\mathbb{B}_d$ and let $a > 0$.
  Then there exists a function $g \in \Mult(\mathcal{D}_a(\mathbb{B}_d))$ with $f(T) = g(T)$ and
  \begin{equation*}
    \|g\|_{\Mult(\mathcal{D}_a(\mathbb{B}_d))} \le (2 C(d) \sqrt{d} \max(1,a^{-1/2}))^{n-1} \|f\|_\infty,
  \end{equation*}
  where $C(d)$ is the constant of Lemma \ref{lem:Gleason}.
\end{lem}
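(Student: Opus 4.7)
The plan is to prove the lemma by induction on $n$ via a Schur-algorithm-style reduction. In the base case $n = 1$, the tuple $T$ collapses to a single point $\lambda \in \mathbb{B}_d$ and one takes $g$ to be the constant $f(\lambda)$, which has multiplier norm $|f(\lambda)| \le \|f\|_\infty$.

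For the inductive step, pick $\lambda$ in the joint spectrum of $T$ and jointly upper-triangularize $T$ in an orthonormal basis, placing $\lambda$ in the $(1,1)$ diagonal position of each $T_i$. This yields block forms
\[
T_i = \begin{pmatrix} \lambda_i & r_i \\ 0 & T'_i \end{pmatrix},
\]
where $T' = (T'_1, \ldots, T'_d)$ is a commuting tuple of $(n-1) \times (n-1)$ matrices whose joint spectrum, being the remainder of $\sigma(T)$, is still contained in $\mathbb{B}_d$ by the formula for $\sigma(T)$ recalled in Section \ref{sec:prelim}. Combining Lemma \ref{lem:Gleason} with a ball automorphism moving $\lambda$ to the origin produces a decomposition
\[
f(z) = f(\lambda) + \sum_{i=1}^d (z_i - \lambda_i)\, f_i(z),
\]
with $f_i \in H^\infty(\mathbb{B}_d)$ and $\|f_i\|_\infty \le C(d)\,\|f - f(\lambda)\|_\infty \le 2 C(d)\,\|f\|_\infty$. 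Apply the induction hypothesis to $T'$ and to each $f_i$ to obtain multipliers $g_i \in \Mult(\mathcal{D}_a(\mathbb{B}_d))$ with $g_i(T') = f_i(T')$ and $\|g_i\|_{\Mult(\mathcal{D}_a(\mathbb{B}_d))}$ bounded by the $(n-1)$-size inductive constant times $\|f_i\|_\infty$.

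Define
\[
g(z) := f(\lambda) + \sum_{i=1}^d (z_i - \lambda_i)\, g_i(z) \in \Mult(\mathcal{D}_a(\mathbb{B}_d)).
\]
A direct block-matrix computation gives $g(T) = f(T)$: because $\lambda$ sits in the $(1,1)$ corner, each $T_i - \lambda_i I$ has vanishing first column, so when it multiplies the jointly upper-triangular matrix $g_i(T)$ only the lower-right block $g_i(T') = f_i(T')$ enters. Comparing block by block against the analogous formula $f(T) = f(\lambda) I + \sum_i (T_i - \lambda_i I) f_i(T)$ then shows the $(1,1)$, $(1,2)$, and $(2,2)$ blocks all coincide. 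To estimate the multiplier norm, factor $g - f(\lambda)$ as the row-column product $[z_1 - \lambda_1, \ldots, z_d - \lambda_d] \cdot [g_1, \ldots, g_d]^T$. By Lemma \ref{lem:norm_a} together with the trivial bound $\|[\lambda_1, \ldots, \lambda_d]\|_{\Mult} \le \|\lambda\| \le 1$, the shifted row has multiplier norm at most $2 \max(1, a^{-1/2})$, while the column has multiplier norm at most $\bigl(\sum_i \|g_i\|_{\Mult}^2\bigr)^{1/2} \le \sqrt{d}\,\max_i \|g_i\|_{\Mult}$. Multiplying these contributions and telescoping the resulting recursion through the Gleason bound $\|f_i\|_\infty \le 2 C(d)\,\|f\|_\infty$ closes the induction with the claimed $(2 C(d)\sqrt{d}\max(1, a^{-1/2}))^{n-1}$ bound.

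The main obstacle is the careful bookkeeping of the constants: at each level the additive scalar contribution $|f(\lambda)|$ together with the several factors of $2$ (from $\|f - f(\lambda)\|_\infty \le 2 \|f\|_\infty$ and from the shifted-coordinate row norm $\|[z_i - \lambda_i]\|_{\Mult}$) must be combined so that the recursion produces a clean power of $C = 2 C(d) \sqrt{d} \max(1, a^{-1/2})$ with exponent $n - 1$ rather than $n$. This forces a tight pairing of the constant term against the geometric-series part of the recurrence — most cleanly by exploiting the automorphism $\phi_\lambda$ so that one of the stray factors of $2$ is absorbed directly into the definition of $C$ rather than dragged through the induction.
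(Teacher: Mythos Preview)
Your inductive Schur-reduction strategy is the paper's strategy, but two steps do not go through as written.

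\textbf{Gleason at $\lambda$.} You assert that $f - f(\lambda) = \sum_i (z_i - \lambda_i) f_i$ with $\|f_i\|_\infty \le C(d)\|f - f(\lambda)\|_\infty$, where $C(d)$ is the constant of Lemma~\ref{lem:Gleason}. No such $\lambda$-independent bound exists. For $d = 1$ and $f(z) = (\lambda - z)/(1 - \bar\lambda z)$ one has $f(\lambda) = 0$, $\|f\|_\infty = 1$, and the unique $f_1(z) = -1/(1 - \bar\lambda z)$ satisfies $\|f_1\|_\infty = (1 - |\lambda|)^{-1}$; restricting this example to the first coordinate shows the same blow-up for any $d \ge 2$. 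Composing with a ball automorphism $\theta$ sending $\lambda$ to $0$ and then applying Lemma~\ref{lem:Gleason} at the origin yields factors $\theta_i(z)$, not $z_i - \lambda_i$, since $\theta$ is not a translation. The paper therefore does not attempt a decomposition centered at $\lambda$: it replaces $T$ by $\theta(T)$ and $f$ by $f \circ \theta^{-1}$ at the outset, using automorphism invariance of both $H^\infty(\mathbb{B}_d)$ and $\Mult(\mathcal{D}_a(\mathbb{B}_d))$, so that one may take $\lambda = 0$ and work with the unshifted row $[z_1,\ldots,z_d]$ of multiplier norm exactly $\max(1,a^{-1/2})$ (no stray factor of $2$).

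\textbf{The constant term.} Even after reducing to $\lambda = 0$, handling $f(0)$ additively as you do yields the recursion $K_n \le 1 + C K_{n-1}$ with $K_1 = 1$, hence $K_n \le (C^n - 1)/(C - 1)$, which already exceeds the claimed $C^{n-1}$ at $n = 2$. Your last paragraph flags this but does not resolve it; no reshuffling of factors of $2$ turns an additive recursion into a multiplicative one. The paper's device is to compose with a \emph{disc} automorphism $\psi$ sending $c = f(0)$ to $0$: set $h = \psi \circ f$, so $h(0) = 0$ and $\|h\|_\infty \le 2\varepsilon$ whenever $\|f\|_\infty \le \varepsilon$. One runs Gleason and the induction on $h$ to produce $u = \sum_i z_i u_i$ with $u(T) = h(T)$ and $\|u\|_{\Mult} \le 1$, and then sets $g = \psi^{-1} \circ u$. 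Since $\psi^{-1}$ maps $\overline{\mathbb{D}}$ into itself, the classical von Neumann inequality gives $\|g\|_{\Mult} \le 1$ for free, and $g(T) = \psi^{-1}(h(T)) = f(T)$. This multiplicative absorption of the constant term is precisely what yields the clean exponent $n-1$.
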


\begin{proof}
  The proof is by induction on $n$. If $n=1$, we may choose $g$ to be a constant function.
  Suppose that $n \ge 2$ and that the statement has been shown for $(n-1) \times (n-1)$ row contractions.

  Let $T$ be a tuple of commuting $n \times n$ matrices whose spectrum is contained in $\mathbb{B}_d$.
  By a unitary change of basis, we may assume that each $T_i$ is upper triangular, say
  \begin{equation*}
    T_i =
    \begin{bmatrix}
      a_i & b_i \\
      0 & A_i
    \end{bmatrix},
  \end{equation*}
  where $a_i$ is a scalar, $b_i$ is a row of length $n-1$, and $A_i$ is an $(n-1) \times (n-1)$ matrix.
  The assumption on the spectrum of $T$ implies that $(a_1,\ldots,a_d) \in \mathbb{B}_d$, so there exists a biholomorphic automorphism $\theta$
  of $\mathbb{B}_d$ that maps $(a_1,\ldots,a_d)$ to $0$.
  Note that the spectrum of $\theta(T)$ is again contained in $\mathbb{B}_d$.
  Thus, replacing $T$ with $\theta(T)$ and $f$ with $f \circ \theta^{-1}$ and using automorphism invariance of $H^\infty(\mathbb{B}_d)$ and
  $\Mult(\mathcal{D}_a(\mathbb{B}_d))$ (see, e.g.\ Propositions 4.1 and 4.3 in \cite{Hartz17a}),
  we may assume that $a_i = 0$ for all $i$; this uses the superposition principle for the functional calculus (Proposition \ref{prop:superposition}). Hence,
  \begin{equation}
    \label{eqn:upper_triangular}
    T_i =
    \begin{bmatrix}
      0 & b_i \\
      0 & A_i
    \end{bmatrix}.
  \end{equation}
  Note that the joint spectrum of the tuple $A = (A_1, \ldots, A_d)$ is also contained in $\bB_d$. 

  Next, let
  \begin{equation*}
    \varepsilon = (2 C(d) \sqrt{d} \max(1,a^{-1/2}))^{-(n-1)},
  \end{equation*}
  and suppose that $\|f\|_\infty \le \varepsilon$.
  We will show that there exists $g \in \Mult(\mathcal{D}_a(\mathbb{B}_d))$ with $f(T) = g(T)$ and $\|g\|_{\Mult(\mathcal{D}_a(\mathbb{B}_d))} \le 1$.
  Let $c = f(0)$ and let $\psi$ be an automorphism
  of $\mathbb{D}$ that maps $c$ to $0$ and $0$ to $c$.
  Define $h = \psi \circ f$.
  Using a standard estimate for holomorphic self-maps of $\mathbb{D}$ \cite[Corollary 2.40]{CM95}, we see that
  \begin{equation*}
    |h(z)| = |\psi(f(z))| \le \frac{|c| + |f(z)|}{1 + |c| |f(z)|} \le 2 \varepsilon.
  \end{equation*}
  Thus, $h \in H^\infty(\mathbb{B}_d)$ with $\|h\|_\infty \le 2 \varepsilon$
  and $h(0) = 0$.
  By Lemma \ref{lem:Gleason}, there exist $h_1,\ldots,h_d \in H^\infty(\mathbb{B}_d)$ with
  $h = \sum_{i=1}^d z_i h_i$ and $\|h_i\|_\infty \le 2 C(d) \varepsilon$.
  From \eqref{eqn:upper_triangular}, we infer that
  \begin{equation}
    \label{eqn:matrix}
    h(T) = \sum_{i=1}^d T_i h_i(T)
    = \sum_{i=1}^d
    \begin{bmatrix}
      0 & b_i \\ 0 & A_i
    \end{bmatrix}
    \begin{bmatrix}
      h_i(0) & * \\
      0 & h_i(A)
    \end{bmatrix}
    = \sum_{i=1}^d
    \begin{bmatrix}
      0 & b_i h_i(A) \\
      0 & A_i h_i(A)
    \end{bmatrix}.
  \end{equation}
  By the inductive hypothesis, there exist $u_1,\ldots,u_d \in \Mult(\mathcal{D}_a(\mathbb{B}_d))$ with $u_i(A) = h_i(A)$ and
  \begin{equation}
    \begin{split}
    \label{eqn:u_i_mult_norm_estimate}
    \|u_i\|_{\Mult(\mathcal{D}_a(\mathbb{B}_d))} &\le (2 C(d) \sqrt{d} \max(a,a^{-1/2}))^{n-2} \|h_i\|_\infty \\ &\le (2 C(d))^{n-1} ( \sqrt{d} \max(1,a^{-1/2}))^{n-2} \varepsilon = (\sqrt{d} \max (1,a^{-1/2}))^{-1}
  \end{split}
  \end{equation}
  by choice of $\varepsilon$.
  Let
  \begin{equation*}
    u = \sum_{i=1}^d z_i u_i =
    \begin{bmatrix}
      z_1 & \cdots & z_d
    \end{bmatrix}
    \begin{bmatrix}
      u_1 \\ \vdots \\ u_d
    \end{bmatrix}.
  \end{equation*}
  From \eqref{eqn:u_i_mult_norm_estimate}, it follows that the column has multiplier norm at most $\max(1,a^{-1/2})^{-1}$, so Lemma \ref{lem:norm_a} implies that
  $u \in \Mult(\mathcal{D}_a(\mathbb{B}_d))$ with $\|u\|_{\Mult(\mathcal{D}_a(\mathbb{B}_d))} \le 1$.
  Moreover, since $u_i(A) = h_i(A)$, the computation in \eqref{eqn:matrix} shows that $u(T) = h(T)$.
  Since $\|u\|_{\Mult(\mathcal{D}_a(\mathbb{B}_d))} \le 1$, we may define $g = \psi^{-1} \circ u$, so that $g \in \Mult(\mathcal{D}_a(\mathbb{B}_d))$ with $\|g\|_{\Mult(\mathcal{D}_a(\mathbb{B}_d))} \le 1$ by the classical von Neumann inequality.
  Moreover, since $h = \psi \circ f$ and $h(T) = u(T)$, it follows that $f(T) = \psi^{-1}(u(T)) = g(T)$.
  This completes the induction and hence the proof.
\end{proof}

The following lemma is useful for improving estimates if
the number of variables $d$ is significantly larger than the size of the matrix $n$.
It is inspired by the result in \cite{KZ18} that solutions to extremal $3$-point Pick problems in any number of variables
only depend on two variables up to automorphisms.

\begin{lem}
  \label{lem:variable_reduction}
  Let $T$ be a $d$-tuple of commuting $n \times n$ matrices whose joint spectrum is contained in $\mathbb{B}_d$.
  \begin{enumerate}[label=\normalfont{(\alph*)}]
    \item If $d \ge n \ge 2$ and if $T$ is jointly diagonalizable, then there exists a biholomorphic automorphism
      $\theta$ of $\mathbb{B}_d$ such that at most the first $n-1$ operators in the $d$-tuple $\theta(T)$
      are non-zero.
    \item If $d > \lfloor {n^2/4} \rfloor + 1$, then there exists a biholomorphic automorphism $\theta$ of $\mathbb{B}_d$,
      given by a $d \times d$ unitary,
      such that at most the first $\lfloor {n^2/4} \rfloor + 1$ operators in the $d$-tuple $\theta(T)$ are non-zero.
  \end{enumerate}
\end{lem}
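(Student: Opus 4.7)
The plan is to move either the joint spectrum (in part (a)) or the tuple itself (in part (b)) into a low-dimensional coordinate subspace via an automorphism, and then read off the vanishing of the last components directly.

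For part (a), since $T$ is jointly diagonalizable, I would first pick a unitary change of basis on $\mathbb{C}^n$ that simultaneously diagonalizes $T_1,\ldots,T_d$, with joint eigenvalues $\lambda^{(1)},\ldots,\lambda^{(n)} \in \mathbb{B}_d$. Next, apply a Möbius automorphism $\varphi$ of $\mathbb{B}_d$ sending $\lambda^{(1)}$ to $0$. The remaining $n-1$ images $\mu^{(j)} := \varphi(\lambda^{(j)})$, for $j=2,\ldots,n$, span a $\mathbb{C}$-linear subspace of $\mathbb{C}^d$ of dimension at most $n-1$; since $d \ge n-1$, there is a unitary $U \in M_d(\mathbb{C})$ carrying this span into $\mathrm{span}(e_1,\ldots,e_{n-1})$. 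Setting $\theta = U \circ \varphi$, each joint eigenvalue $\theta(\lambda^{(j)})$ lies in $\mathrm{span}(e_1,\ldots,e_{n-1})$. The superposition principle for the holomorphic functional calculus (Proposition \ref{prop:superposition}) then ensures that $\theta(T)$ is jointly diagonal in the same basis, with diagonal entries $\theta(\lambda^{(j)})$. Consequently the $i$-th component has only zero diagonal entries, and hence vanishes, for every $i \ge n$.

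For part (b) we cannot diagonalize, but the restriction to a unitary automorphism $\theta(z)=Uz$ makes the functional calculus entirely transparent, since $\theta(T)_i = \sum_{j=1}^d U_{ij} T_j$. The first step is to set $V = \mathrm{span}\{T_1,\ldots,T_d\} \subseteq M_n(\mathbb{C})$; since the $T_j$'s pairwise commute, any two elements of $V$ commute, so $V$ is contained in the commutative unital subalgebra of $M_n(\mathbb{C})$ it generates together with the identity. The crucial second step is to invoke Schur's classical bound on the dimension of commutative subalgebras of $M_n(\mathbb{C})$ (with the elementary proof in \cite{Mir98}): this gives $k := \dim V \le \lfloor n^2/4 \rfloor + 1$. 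Then the linear map $L:\mathbb{C}^d \to M_n(\mathbb{C})$ with $L(e_j) = T_j$ has kernel of dimension $d - k \ge d - \lfloor n^2/4 \rfloor - 1$, so I can choose an orthonormal basis $v_1,\ldots,v_d$ of $\mathbb{C}^d$ with $v_{\lfloor n^2/4 \rfloor + 2},\ldots,v_d \in \ker L$. Taking $U$ to be the unitary whose $i$-th row is $v_i^T$ and setting $\theta(z) = Uz$ yields $\theta(T)_i = L(v_i) = 0$ for every $i > \lfloor n^2/4 \rfloor + 1$.

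The only substantive external input is Schur's dimension bound; without it, part (b) would at best reduce to $\dim M_n(\mathbb{C}) = n^2$ nontrivial components rather than $\lfloor n^2/4 \rfloor + 1$. Everything else is straightforward linear algebra, apart from the one conceptual checkpoint in part (a), namely that the holomorphic functional calculus commutes with composition by biholomorphic automorphisms so that joint diagonalization is preserved, which is exactly the superposition principle cited above.
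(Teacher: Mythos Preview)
Your proof is correct and follows essentially the same route as the paper's: in (a) you move one joint eigenvalue to the origin with a M\"obius automorphism and then rotate the remaining $n-1$ eigenvalues into $\mathrm{span}(e_1,\ldots,e_{n-1})$ by a unitary, exactly as the paper does; in (b) you use the same linear map $L(e_j)=T_j$, invoke Schur's bound on commutative subalgebras of $M_n$ to control its rank, and build $U$ from an orthonormal basis adapted to $\ker L$, which is precisely the paper's argument. The only cosmetic difference is that in (a) you cite the superposition principle where the more elementary fact that the functional calculus acts entrywise on jointly diagonal tuples (stated in Section~\ref{sec:prelim}) already suffices.
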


\begin{proof}
  (a) The joint spectrum $\sigma(T)$ of $T$ consists of at most $n$ points in $\mathbb{B}_d$.
  Thus, we may find a biholomorphic automorphism $\theta$ of $\mathbb{B}_d$ such that $\theta(\sigma(T)) \subset \mathbb{B}_{n-1} \times \{0\}$ by first moving one of the points in $\sigma(T)$ to the origin and then applying a suitable
  $d \times d$ unitary. Since $T$ is jointly diagonalizable, it follows that at most the first $n-1$ entries
  in $\theta(T)$ are non-zero.

  (b) Let $T = (T_1,\ldots,T_d)$ and consider the linear map
  \begin{equation*}
    \Phi: \mathbb{C}^d \to M_n, \quad (\alpha_j)_{j=1}^d \mapsto \sum_{j=1}^d \alpha_j T_j.
  \end{equation*}
  By a theorem of Schur, a commutative subalgebra of $M_n$ has dimension at most $\lfloor n^2/4 \rfloor + 1$.
  It follows that $\ker(\Phi)$ has codimension at most $\lfloor n^2/4 \rfloor + 1$. Therefore, there exists an orthonormal
  basis $(u_j)_{j=1}^d$ of $\mathbb{C}^d$ such that $u_j \in \ker(\Phi)$ for $j >\lfloor {n^2/4} \rfloor + 1$.
  Define a unitary matrix
  \begin{equation*}
    U =
    \begin{bmatrix}
      u_1^T \\ \vdots \\ u_d^T
    \end{bmatrix}
  \end{equation*}
  and let $\theta$ be the automorphism given by $U$.
  By definition, the $j$-th entry of the $d$-tuple $\theta(T)$ is given by $\Phi(u_j)$,
  which is zero if $j >\lfloor {n^2/4} \rfloor + 1$.
\end{proof}

With the help of the preceding lemma, we can improve the constant in Lemma \ref{lem:Schur} in the case
when $d$ is siginificantly larger than $n$.

\begin{lem}
  \label{lem:Schur_improved}
  Let $f \in H^\infty(\mathbb{B}_d)$, let $T$ be a $d$-tuple of commuting $n \times n$ matrices whose joint
  spectrum is contained in $\mathbb{B}_d$ and let $a > 0$.
  Then there exists a function $g \in \Mult(\mathcal{D}_a(\mathbb{B}_d))$ with $f(T) = g(T)$ and
  \begin{equation*}
    \|g\|_{\Mult(\mathcal{D}_a(\mathbb{B}_d))} \le (2 \min(C(d)\sqrt{d}, C(n')\sqrt{n'})  \max(1,a^{-1/2}))^{n-1} \|f\|_\infty,
  \end{equation*}
  where $n' = \lfloor {n^2/4} \rfloor + 1$ and $C(k)$ is the constant of Lemma \ref{lem:Gleason}.
\end{lem}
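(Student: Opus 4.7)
The bound with $C(d)\sqrt{d}$ inside the minimum is precisely the content of Lemma \ref{lem:Schur}, so the task is to exhibit a multiplier $g$ witnessing the bound with $C(n')\sqrt{n'}$ and then take whichever of the two constructions yields the smaller norm (choosing $C(\cdot)$ monotone if necessary, so that in the case $d\le n'$ direct application of Lemma \ref{lem:Schur} already suffices). The substantive case is $d>n'$, and the idea is to trade $d$-variable estimates for $n'$-variable estimates by reducing the number of essential coordinates via Lemma \ref{lem:variable_reduction}(b).

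Assume $d>n'$. Apply Lemma \ref{lem:variable_reduction}(b) to obtain a biholomorphic automorphism $\theta$ of $\mathbb{B}_d$, induced by a $d\times d$ unitary, so that $\theta(T)=(S_1,\ldots,S_{n'},0,\ldots,0)$. Write $S=(S_1,\ldots,S_{n'})$; this is a commuting tuple of $n\times n$ matrices whose joint spectrum lies in $\mathbb{B}_{n'}$ (since the trailing entries of $\sigma(\theta(T))$ are zero). Define $\tilde f\in H^\infty(\mathbb{B}_{n'})$ by
\[
  \tilde f(w)=(f\circ\theta^{-1})(w_1,\ldots,w_{n'},0,\ldots,0),
\]
which satisfies $\|\tilde f\|_\infty\le\|f\|_\infty$. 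Applying Lemma \ref{lem:Schur} to $\tilde f$ and $S$ in $n'$ variables produces $\tilde g\in\Mult(\mathcal{D}_a(\mathbb{B}_{n'}))$ with $\tilde g(S)=\tilde f(S)$ and
\[
  \|\tilde g\|_{\Mult(\mathcal{D}_a(\mathbb{B}_{n'}))}\le (2C(n')\sqrt{n'}\max(1,a^{-1/2}))^{n-1}\|f\|_\infty.
\]

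To promote $\tilde g$ back to a multiplier on $\mathbb{B}_d$, compose with the coordinate projection $P\colon\mathbb{B}_d\to\mathbb{B}_{n'}$. The kernel of $\mathcal{D}_a(\mathbb{B}_d)$ restricted along the embedding $w\mapsto(w,0)$ is exactly the kernel of $\mathcal{D}_a(\mathbb{B}_{n'})$, so $h\mapsto h\circ P$ is an isometric embedding $\Mult(\mathcal{D}_a(\mathbb{B}_{n'}))\hookrightarrow\Mult(\mathcal{D}_a(\mathbb{B}_d))$, exactly as was used in the proof of Lemma \ref{lem:two_point}. Set $g_0=\tilde g\circ P$ and $g=g_0\circ\theta$. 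Automorphism invariance of $\Mult(\mathcal{D}_a(\mathbb{B}_d))$ preserves the norm, and the superposition principle for the functional calculus gives
\[
  g(T)=g_0(\theta(T))=\tilde g(P(\theta(T)))=\tilde g(S)=\tilde f(S)=f(T).
\]
Choosing between this $g$ and the one supplied directly by Lemma \ref{lem:Schur} yields the minimum bound in the conclusion.

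\textbf{Main obstacle.} The only delicate point is the lifting step: one must know that composition with the projection is an isometric embedding $\Mult(\mathcal{D}_a(\mathbb{B}_{n'}))\hookrightarrow\Mult(\mathcal{D}_a(\mathbb{B}_d))$, which in turn rests on the restriction property of the kernels $(1-\langle z,w\rangle)^{-a}$; this is the direct analog of the ingredient already invoked for $H^2_d$ in the proof of Lemma \ref{lem:two_point}. Once this is in hand, the rest is a clean reduction: Lemma \ref{lem:variable_reduction}(b) compresses the $d$-tuple into an $n'$-tuple, Lemma \ref{lem:Schur} is applied in the smaller ambient dimension, and automorphism invariance plus the superposition principle transport the result back.
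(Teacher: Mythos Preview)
Your proposal is correct and follows essentially the same route as the paper: reduce to the case $d>n'$ via Lemma~\ref{lem:variable_reduction}(b), apply Lemma~\ref{lem:Schur} in $n'$ variables to the restricted function, and lift back via composition with the coordinate projection (the paper cites \cite[Lemma~6.2]{AHM+20a} for the isometric embedding you flag as the main obstacle) together with automorphism invariance. Your explicit remark about choosing $C(\cdot)$ monotone to handle the case $d\le n'$ is a fair way to address a point the paper leaves implicit.
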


\begin{proof}
  Let $T = (T_1,\ldots,T_d)$.
  In view of Lemma \ref{lem:Schur}, we may assume that $d > n' = \lfloor {n^2/4} \rfloor + 1$.
  In this case, part (b) of Lemma \ref{lem:variable_reduction} shows that there
  exists a biholomorphic automorphism $\theta$ of $\mathbb{B}_d$ such that 
  at most the first $n'$ entries of $\theta(T)$ are non-zero.
  Thus, by replacing $T$ with $\theta(T)$ and using automorphism invariance
  of $\Mult(\mathcal{D}_a(\mathbb{B}_d))$ as in the proof of Lemma \ref{lem:Schur},
  we may assume that $T_j = 0$ for $j \ge n' +1$.

  Let $P: \mathbb{C}^d \to \mathbb{C}^{n'}$ be the projection onto the first $n'$ coordinates
  and let $i: \mathbb{C}^{n'} \to \mathbb{C}^d$ be the inclusion.
  Let $f \in H^\infty(\mathbb{B}_d)$. Applying Lemma \ref{lem:Schur} to $f \circ i \in H^\infty(\mathbb{B}_{n'})$
  and the shortened tuple $(T_1,\ldots,T_{n'})$, we find $h \in \Mult(\mathcal{D}_a(\mathbb{B}_{n'}))$
  such that
  \begin{equation*}
    h(T_1,\ldots,T_{n'}) = (f \circ i)(T_1,\ldots,T_{n'}) =  f(T)
  \end{equation*}
  and
  \begin{equation*}
    \|h\|_{\Mult(\mathcal{D}_a(\mathbb{B}_{n'}))} \le
    (2 C(n') \sqrt{n'} \max(1,a^{-1/2}))^{n-1} \|f\|_\infty.
  \end{equation*}
  Let $g = h \circ P$. Then $g \in \Mult(\mathcal{D}_a(\mathbb{B}_d))$
  with $\|g\|_{\Mult(\mathcal{D}_a(\mathbb{B}_d))} = \|h\|_{\Mult(\mathcal{D}_a(\mathbb{B}_{n'}))}$,
  see for instance \cite[Lemma 6.2]{AHM+20a}.
  Moreover,
  \begin{equation*}
    g(T) = h(T_1,\ldots,T_{n'}) = f(T),
  \end{equation*}
  as desired.
\end{proof}

The following theorem is a refinement of Theorem \ref{thm:main}.
\begin{thm}\label{thm:vNrow}
  \label{thm:matrix_inequality}
  If $T= (T_1,\ldots,T_d)$ is a commuting $n \times n$ row contraction, then
  \begin{equation*}
    \|p(T)\| \le C_{d,n} \|p\|_\infty 
  \end{equation*}
  for all $p \in \mathbb{C}[z_1,\ldots,z_d]$, where 
  \begin{equation*}
   C_{d,n} \le (2 \min(C(d) \sqrt{d}, C(n') \sqrt{n'}))^{n-1} , 
  \end{equation*}
 $n' = \lfloor {n^2/4} \rfloor + 1$, and $C(k)$ is the constant of Lemma \ref{lem:Gleason}.
\end{thm}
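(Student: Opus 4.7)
The plan is that Theorem \ref{thm:vNrow} falls out almost immediately from Lemma \ref{lem:Schur_improved} once we specialize to $a=1$ and invoke the von Neumann inequality for the Drury--Arveson space. The point of taking $a=1$ is twofold: first, $\mathcal{D}_1(\mathbb{B}_d) = H^2_d$, so the multiplier algebra in question is exactly $\Mult(H^2_d)$; second, $\max(1,a^{-1/2}) = 1$, which removes the $a$-dependent factor from the constant produced by Lemma \ref{lem:Schur_improved} and yields precisely the bound $(2\min(C(d)\sqrt{d}, C(n')\sqrt{n'}))^{n-1}$ claimed in the theorem.

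The first step is to reduce to the case of a strict row contraction. Given a commuting $n\times n$ row contraction $T$, the tuples $rT$ with $r\in(0,1)$ are strict row contractions, and for each such tuple the joint spectrum is contained in the \emph{open} ball $\mathbb{B}_d$ (characters on the generated Banach algebra are contractive, and for a strict row contraction the norm estimate is strict). Since $p(T) = \lim_{r\to 1^{-}} p(rT)$ in operator norm, and since the constant in the statement does not depend on $r$, it suffices to prove the desired inequality for strict row contractions.

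Assuming now $\sigma(T)\subset \mathbb{B}_d$, I would apply Lemma \ref{lem:Schur_improved} with $a=1$ and $f = p$. This yields a function $g \in \Mult(H^2_d)$ satisfying $g(T) = p(T)$ and
\[
\|g\|_{\Mult(H^2_d)} \le \bigl(2\min(C(d)\sqrt{d}, C(n')\sqrt{n'})\bigr)^{n-1}\|p\|_\infty.
\]
Now I invoke the von Neumann inequality for $H^2_d$ due to Drury, M\"uller--Vasilescu, and Arveson (which the authors already recalled in the proof of Proposition \ref{prop:diagonalizable}): for any commuting row contraction $T$ and any multiplier $h$ on $H^2_d$, $\|h(T)\| \le \|h\|_{\Mult(H^2_d)}$. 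Applied to $h = g$, this gives
\[
\|p(T)\| = \|g(T)\| \le \|g\|_{\Mult(H^2_d)} \le \bigl(2\min(C(d)\sqrt{d}, C(n')\sqrt{n'})\bigr)^{n-1}\|p\|_\infty,
\]
which is the desired estimate.

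There is no real obstacle here; all the work has already been done in Lemma \ref{lem:Schur_improved}. The only minor point to be careful about is that Lemma \ref{lem:Schur_improved} requires $\sigma(T)\subset \mathbb{B}_d$ (open ball) in order for its holomorphic functional calculus setup to apply, whereas a general row contraction only has spectrum in $\overline{\mathbb{B}}_d$; the approximation $T\leadsto rT$ with $r\uparrow 1$ handles this cleanly because the target inequality and its constant are both preserved under this limit.
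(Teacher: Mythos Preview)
Your proof is correct and follows essentially the same route as the paper: reduce to a strict row contraction by replacing $T$ with $rT$, apply Lemma \ref{lem:Schur_improved} with $a=1$ (so that $\mathcal{D}_a(\mathbb{B}_d)=H^2_d$ and the factor $\max(1,a^{-1/2})$ drops out), and then finish with the Drury--M\"uller--Vasilescu--Arveson von Neumann inequality for $H^2_d$.
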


\begin{proof}
As noted in Section \ref{sec:prelim}, we have that $\sigma(T) \subseteq \ol{\bB_d}$. 
  Replacing $T$ with $r T$ for $0 < r < 1$, we may assume that the spectrum of $T$ is contained in $\mathbb{B}_d$.
  By Lemma \ref{lem:Schur_improved}, there exists $g \in \Mult(H^2_d)$ with $g(T) = p(T)$
and $\|g\|_{\Mult(H^2_d)} \le \widetilde{C}_{d,n}$, where $\widetilde{C}_{d,n}= (2 \min(C(d) \sqrt{d}, C(n') \sqrt{n'}))^{n-1} \|p\|_\infty$.
  Hence, by the von Neumann inequality for $H^2_d$ of Drury \cite{Dru78}, M\"uller-Vasilescu \cite{MV93} and Arveson \cite{Arveson98}, we obtain the estimate
  \begin{equation*}
    \|p(T)\| = \|g(T)\| \le \|g\|_{\Mult(H^2_d)} \le \tilde{C}_{d,n} \|p\|_\infty,
  \end{equation*}
  as desired.
\end{proof}

Notice that the constant in Theorem \ref{thm:vNrow} may be bounded above by
\begin{equation*}
  (2 C(n') \sqrt{n'})^{n-1},
\end{equation*}
which is independent of $d$ and only depends on $n$.
However, for fixed $d$ and large $n$, the estimate in Theorem \ref{thm:matrix_inequality}
is better.

\begin{rem}
It follows from the incomparability of the norms $\| \cdot \|_{\Mult(H^2_d)}$ and $\| \cdot \|_{\infty}$ (see the discussion in the introduction) that $C_{d,n} \xrightarrow{n \to \infty} \infty$ for all $d \geq 2$. 
In fact, considering the compression of the tuple $M_z$ on $H^2_d$ to the space of all polynomials of degree at most $k$ and using computations done in the proof of \cite[Theorem 3.3]{Arveson98}, one can show that for $d \ge 2$ and $k \ge 1$,
  \begin{equation*}
    C_{d,n} \ge (2 \pi)^{\frac{d-1}{4}} d^{-\frac{1}{4}} k^{\frac{d-1}{4}},
  \end{equation*}
  where $n = \binom{dk+d}{d}$ is the dimension of the space of polynomials of degree at most $k$ in $d$ variables.
  In particular, for sufficiently large $n$, we have
  \begin{equation*}
    C_{d,n} \geq C_{2,n} \ge n^{\frac{1}{8}}.
  \end{equation*}
  Unfortunately, the gap between these lower bounds and the upper bounds given by Theorem \ref{thm:vNrow} is huge, and we are not able to determine whether there is a strict inequality $C_{2,n} < C_{d,n}$ for any $n > 2$. 
\end{rem}

We also easily obtain a completely bounded version of Theorem \ref{thm:matrix_inequality}.

\begin{cor}\label{cor:vNrow_comp}
  Let $T= (T_1,\ldots,T_d)$ be a commuting $n \times n$ row contraction and let $P \in M_r(\mathbb{C}[z_1,\ldots,z_d])$.
  Then
  \begin{equation*}
    \|P(T)\| \le n C_{d,n} \|P\|_\infty,
  \end{equation*}
  where $C_{d,n}$ is the constant from Theorem \ref{thm:vNrow}.
\end{cor}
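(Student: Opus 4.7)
The plan is to reduce the matrix-polynomial inequality to the scalar inequality of Theorem~\ref{thm:vNrow} by means of a Schmidt decomposition on $\mathbb{C}^r \otimes \mathbb{C}^n \cong \mathbb{C}^{rn}$. The factor of $n$ in the bound will arise because any vector in $\mathbb{C}^r \otimes \mathbb{C}^n$ has Schmidt rank at most $\min(r,n) \le n$.

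First I would produce a one-parameter family of scalar polynomials from $P$. For unit vectors $u, v \in \mathbb{C}^r$, set $p_{u,v}(z) = \langle P(z) u, v \rangle$; this is a polynomial in $\mathbb{C}[z_1, \ldots, z_d]$ with $\|p_{u,v}\|_\infty \le \|P\|_\infty$, so Theorem~\ref{thm:vNrow} gives $\|p_{u,v}(T)\| \le C_{d,n} \|P\|_\infty$. An elementary computation with the decomposition $P(T) = \sum_{i,j} e_{ij} \otimes p_{ij}(T)$, where $e_{ij}$ are the matrix units in $M_r$, shows that on simple tensors
$$\langle P(T)(u \otimes \zeta), v \otimes \tau \rangle = \langle p_{u,v}(T) \zeta, \tau \rangle \qquad (\zeta, \tau \in \mathbb{C}^n),$$
so scalar polynomial evaluation controls the matrix coefficients of $P(T)$ against simple tensors.

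Next I would handle arbitrary vectors in $\mathbb{C}^r \otimes \mathbb{C}^n$ by the Schmidt decomposition. Given unit vectors $\xi, \eta$, write
$$\xi = \sum_{k=1}^{K} s_k\, u_k \otimes \zeta_k, \qquad \eta = \sum_{l=1}^{K} t_l\, v_l \otimes \tau_l,$$
with $K = \min(r,n)$, the families $\{u_k\}, \{v_l\}$ orthonormal in $\mathbb{C}^r$, the families $\{\zeta_k\}, \{\tau_l\}$ orthonormal in $\mathbb{C}^n$, and $\sum_k s_k^2 = \sum_l t_l^2 = 1$. Expanding bilinearly and applying the identity from the previous paragraph to each cross term yields
$$|\langle P(T) \xi, \eta \rangle| = \Big|\sum_{k,l} s_k t_l \langle p_{u_k, v_l}(T) \zeta_k, \tau_l\rangle\Big| \le C_{d,n} \|P\|_\infty \Big(\sum_{k=1}^{K} s_k\Big)\Big(\sum_{l=1}^{K} t_l\Big).$$
Cauchy--Schwarz bounds each of the two sums by $\sqrt{K}$, so the product is at most $K \le n$; taking the supremum over unit $\xi, \eta$ delivers $\|P(T)\| \le n C_{d,n} \|P\|_\infty$.

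I do not anticipate any substantial obstacle here: the argument is pure linear algebra once Theorem~\ref{thm:vNrow} is in hand. The only conceptual point worth emphasizing is that bounding the Schmidt rank by $n$ (rather than by $r$) is precisely what produces a constant independent of $r$; this mirrors the standard fact that any bounded linear map into $M_n$ has completely bounded norm at most $n$ times its ordinary norm.
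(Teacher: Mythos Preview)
Your proof is correct. The paper's proof is a one-liner: it observes that Theorem~\ref{thm:vNrow} says the map $p \mapsto p(T)$ from $(\mathbb{C}[z_1,\ldots,z_d],\|\cdot\|_\infty)$ into $M_n$ has norm at most $C_{d,n}$, and then invokes the standard operator space fact (Smith's lemma, cited as \cite[Corollary~2.2.4]{ER00}) that any bounded linear map into $M_n$ is completely bounded with cb norm at most $n$ times its norm.

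What you have done is give a self-contained, elementary proof of exactly that operator space fact in this particular setting. Your Schmidt decomposition argument is in fact one of the standard proofs of Smith's lemma, so the two approaches are really the same under the hood; you have simply unpacked the black box. The payoff of your version is that it is accessible to a reader who does not know operator space theory and makes transparent where the factor $n$ comes from (the Schmidt rank bound $\min(r,n)\le n$); the payoff of the paper's version is brevity and the recognition that nothing beyond a textbook result is needed.
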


\begin{proof}
  Theorem \ref{thm:matrix_inequality} says that the map
  \begin{equation*}
    \mathbb{C}[z_1,\ldots,z_d] \to M_n, \quad p \mapsto p(T),
  \end{equation*}
  is bounded with norm at most $C_{d,n}$ when we regard $\mathbb{C}[z_1,\ldots,z_d]$ as a subspace of $H^\infty(\mathbb{B}_d)$.
  By basic operator space theory (see \cite[Corollary 2.2.4]{ER00}), it follows that the map
  is completely bounded with completely bounded norm at most $n C_{d,n}$.
\end{proof}

%

\section{Gleason's problem}\label{sec:Gleason}

Recall that Gleason's problem for $H^\infty(\mathbb{B}_d)$ is the question of whether every function $f \in H^\infty(\mathbb{B}_d)$
with $f(0) = 0$ can we written as
\begin{equation*}
  f = \sum_{i=1}^d z_i f_i
\end{equation*}
for some $f_1,\ldots,f_d \in H^\infty(\mathbb{B}_d)$.
Work of Leibenson and of Ahern and Schneider shows that this question has a positive answer (see \cite[Section 6.6]{Rudin08} and Lemma \ref{lem:Gleason}).
Thus, it is natural to ask about the minimal possible norm of a tuple $(f_1,\ldots,f_d)$ of solutions.
In \cite{Doubtsov98}, Doubtsov studied linear operators 
\begin{equation*}
  L: \{f \in H^\infty(\mathbb{B}_d): f(0) = 0 \} \to H^\infty(\mathbb{B}_d)^d
\end{equation*}
solving Gleason's problem; he showed that the solution of Leibenson and Ahern--Schneider gives the minimal norm among all such operators.
Moreover, he determined the minimal norm when $H^\infty(\mathbb{B}_d)$ is replaced with $H^2(\mathbb{B}_d)$.

We now use the fact that $C_{2,3} > 1$ to show that Gleason's problem cannot be solved contractively
in $H^\infty(\mathbb{B}_d)$ for $d \ge 2$.
The idea is that if Gleason's problem could be solved contractively, then one could
use the Schur algorithm as in Section \ref{sec:VNI} and the fact that the two point norm on $H^2_d$
equals the supremum norm in the vector-valued setting (Lemma \ref{lem:two_point}) to show that the three point
norm equals the supremum norm as well.

\begin{prop}
  Let $d \ge 2$. There exists a rational function $f \in A(\mathbb{B}_d)$ with $f(0) = 0$
  and $\|f\|_\infty = 1$
  that cannot be written as
  $f = \sum_{i=1}^d z_i f_i$
  with $f_i \in H^\infty(\mathbb{B}_d)$ and $\sup_{z \in \mathbb{B}_d} \sum_{i=1}^d |f_i(z)|^2 \le 1$.
\end{prop}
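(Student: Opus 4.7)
The plan is to argue by contradiction. Assume that every rational $f \in A(\mathbb{B}_d)$ with $f(0) = 0$ and $\|f\|_\infty = 1$ admits a Gleason decomposition $f = \sum_{i} z_i f_i$ with $\sup_{z \in \mathbb{B}_d} \sum_i |f_i(z)|^2 \le 1$. Combining this contractive splitting with the vector-valued two-point identity of Lemma \ref{lem:two_point}, I will force $\|p\|_{\Mult(H^2_d),3} \le \|p\|_\infty$ for every polynomial $p$, contradicting Proposition \ref{prop:3_2}. The function whose existence the theorem asserts is rational in $A(\mathbb{B}_d)$ because it is built as a M\"{o}bius and ball-automorphism transform of the polynomial $z_1^2 + z_2^2$ that appears in Proposition \ref{prop:3_2}.

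The key steps would go as follows. Take $p(z) = z_1^2 + z_2^2$ and any three-point set $F = \{\lambda_1, \lambda_2, \lambda_3\} \subset \mathbb{B}_d$. By automorphism invariance of $\Mult(H^2_d)$ and of the supremum norm, I may assume $\lambda_1 = 0$ after replacing $p$ with $p \circ \theta^{-1}$ for a suitable ball automorphism $\theta$; the result is still rational in $A(\mathbb{B}_d)$. Let $\psi$ be a disk automorphism swapping $p(0)$ and $0$; then $h := \psi \circ p$ is rational in $A(\mathbb{B}_d)$, $h(0) = 0$, and $\|h\|_\infty \le 1$. Applying the contractive Gleason hypothesis to $h / \|h\|_\infty$ and rescaling back, I obtain $h_1,\ldots,h_d \in H^\infty(\mathbb{B}_d)$ with $h = \sum_i z_i h_i$ and the column $(h_i)^T \in M_{d,1}(H^\infty(\mathbb{B}_d))$ of supremum norm at most $1$. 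Applying Lemma \ref{lem:two_point} to this column at the two-point set $F' = \{\lambda_2, \lambda_3\}$ yields a contractive column multiplier $(g_i)^T \in M_{d,1}(\Mult(H^2_d))$ agreeing with $(h_i)$ on $F'$. Set $u = \sum_i z_i g_i$. By Lemma \ref{lem:norm_a} (with $a = 1$) the row $(z_1, \ldots, z_d)$ is a contractive multiplier $H^2_d \otimes \mathbb{C}^d \to H^2_d$, so $\|u\|_{\Mult(H^2_d)} \le 1$; direct evaluation gives $u(0) = 0 = h(0)$ and $u(\lambda_j) = \sum (\lambda_j)_i h_i(\lambda_j) = h(\lambda_j)$ for $j = 2, 3$, so $u|_F = h|_F$. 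The complete Pick property of $H^2_d$ then produces $v \in \Mult(H^2_d)$ of norm at most $1$ with $v|_F = h|_F$, and von Neumann's inequality applied to the contraction $M_v$ shows that $\psi^{-1} \circ v$ is again a contractive multiplier of $H^2_d$. Since $\psi^{-1} \circ v$ agrees with $p$ on $F$, we conclude $\|p|_F\|_{\Mult(H^2_d|_F)} \le 1$; taking the supremum over $F$ gives $\|p\|_{\Mult(H^2_d), 3} \le 1 = \|p\|_\infty$, contradicting Proposition \ref{prop:3_2}.

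The main subtlety will be the row/column bookkeeping in the middle step: the tuple $(h_i)$ must be regarded as a contractive \emph{column} in $M_{d,1}(H^\infty(\mathbb{B}_d))$ so that Lemma \ref{lem:two_point} interpolates it by a contractive column multiplier, whose left-product with the contractive row $(z_1, \ldots, z_d)$ of Lemma \ref{lem:norm_a} is again a scalar contractive multiplier of $H^2_d$. Combined with the use of $\theta$ and $\psi$ to normalize one interpolation point and the value of $p$ there to $0$, this is exactly what promotes a contractive Gleason decomposition into three-point Pick interpolation on the ball --- which Proposition \ref{prop:3_2} already rules out.
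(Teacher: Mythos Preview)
Your proposal is correct and follows essentially the same approach as the paper: build $f = \psi \circ p \circ \theta^{-1}$ from the polynomial and three-point set of Proposition~\ref{prop:3_2}, assume a contractive Gleason decomposition, use Lemma~\ref{lem:two_point} to interpolate the resulting column by a contractive column multiplier on the remaining two points, multiply by the contractive row $(z_1,\ldots,z_d)$, and undo $\psi$ and $\theta$ to contradict Proposition~\ref{prop:3_2}. The only cosmetic differences are that the paper works directly with the single specific three-point set (rather than an arbitrary $F$) and that your appeal to the complete Pick property to produce $v$ is redundant, since your $u$ already satisfies $\|u\|_{\Mult(H^2_d)} \le 1$ and $u|_F = h|_F$.
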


\begin{proof}
  Clearly, it suffices to consider $d=2$.
  By Proposition \ref{prop:3_2}, there exists a polynomial $p$ and a set $F = \{\lambda_1,\lambda_2,\lambda_3\} \subset \mathbb{B}_2$
  so that 
  \begin{equation}
    \label{eqn:p_big}
    \|p \big|_F\|_{\Mult(H^2_2 \big|_F)} > \|p\|_\infty = 1.
  \end{equation}
  Let $\theta$ be a biholomorphic automorphism of $\mathbb{B}_2$
  mapping $\lambda_1$ to $0$ and let $\psi$ be a biholomorphic automorphism of $\mathbb{D}$ mapping $p(\lambda_1)$ to $0$. Set $f = \psi \circ p \circ \theta^{-1}$. Then $f$ is a rational function in $A(\mathbb{B}_2)$ with $f(0) = 0$
  and $\|f\|_\infty = 1$.
  We claim that Gleason's problem for $f$ cannot be solved contractively.

  Suppose towards a contradiction that there exist $f_1,f_2 \in H^\infty(\mathbb{B}_2)$ so that
  \begin{equation*}
    f =
    \begin{bmatrix}
      z_1 & z_2
    \end{bmatrix}
    \begin{bmatrix}
      f_1 \\ f_2
    \end{bmatrix}
    \quad \text{ and }
    \left\|
    \begin{bmatrix}
      f_1 \\ f_2
    \end{bmatrix}
    \right\|_{\infty} \le 1.
  \end{equation*}
  Write $\theta(F) = \{0\} \cup F'$ with $|F'| = 2$.
  Applying Lemma \ref{lem:two_point} to
  the column
  $\begin{bmatrix}
    f_1 \\ f_2
  \end{bmatrix}$,
  we find a column
  $
  \begin{bmatrix}
    h_1 \\ h_2
  \end{bmatrix}
  \in M_{2,1}(\Mult(H^2_d))$ of norm at most $1$
  so that $f_i \big|_{F'} = h_i \big|_{F'}$ for $i = 1,2$.
  Define
  \begin{equation*}
    g =
    \begin{bmatrix}
      z_1 & z_2
    \end{bmatrix}
    \begin{bmatrix}
      h_1 \\ h_2
    \end{bmatrix}.
  \end{equation*}
  Then $\|g\|_{\Mult(H^2_d)} \le 1$ and $g \big|_{\theta(F)} = f \big|_{\theta(F)}$, because $f$ and $g$
  agree on $F'$ and at $0$.
  Set $u = \psi^{-1} \circ g \circ \theta$. By automorphism invariance of $\Mult(H^2_d)$ and by the classical
  von Neumann inequality, we have $\|u\|_{\Mult(H^2_d)} \le 1$.
  Moreover, since $f = \psi \circ p \circ \theta^{-1}$, we find that $u$ agrees with $p$ on $F$.
  This contradicts \eqref{eqn:p_big} and hence finishes the proof.
\end{proof}

\section{Applications to RKHS on the ball}\label{sec:reps}

In this section, we show that our methods can also be used to answer Question 10.3 of \cite{AHM+20a}.
It was shown in \cite{AHM+20a} that for $0 \le a < \frac{d+1}{2}$,
the $n$-point multiplier norm on $\mathcal{D}_a(\mathbb{B}_d)$ is not
comparable to the full multiplier norm, and in fact $\Mult(\mathcal{D}_a(\mathbb{B}_d))$ is not topologically
subhomogeneous. 
This means that there do not exist constants $c$ and $C$ and an integer $N$ such that
for all $f \in \Mult(\mathcal{D}_a(\mathbb{B}_d))$,
\begin{equation*}
  \|f\|_{\Mult(\mathcal{D}_a(\mathbb{B}_d))} \le c \sup \{ \|\pi(f)\| \},
\end{equation*}
where the supremum is taken over all unital homomorphisms $\pi: \mathcal{A} \to M_k$ with $\|\pi\| \le C$
and all $k \le N$.

If $a \ge d$, then $\Mult(\mathcal{D}_a(\mathbb{B}_d)) = H^\infty(\mathbb{B}_d)$ completely
isometrically, so it was asked whether $\Mult(\mathcal{D}_a(\mathbb{B}_d))$ is topologically subhomogeneous
for $\frac{d+1}{2} \le a < d$.

To answer this question, we require the following consequence of Lemma \ref{lem:Schur_improved}.
We let $A(\mathcal{D}_a(\mathbb{B}_d))$ denote the norm closure of the polynomials
in $\Mult(\mathcal{D}_a(\mathbb{B}_d))$.

\begin{lem}
  \label{lem:DA_hom}
  Let $a > 0$. There exists a constant $C(a,n)$ so that for any unital bounded
  homomorphism $\pi: A(\mathcal{D}_a(\mathbb{B}_d)) \to M_n$, we have
  \begin{equation*}
    \|\pi(f)\| \le C(a,n) \|\pi\| \|f\|_\infty
  \end{equation*}
  for all $f \in A(\mathcal{D}_a(\mathbb{B}_d))$.
\end{lem}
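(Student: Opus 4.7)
The plan is to encode the homomorphism by the commuting tuple $T = (\pi(z_1),\ldots,\pi(z_d)) \in M_n^d$ and then estimate $\|\pi(p)\| = \|p(T)\|$ for a polynomial $p$ by applying Lemma~\ref{lem:Schur_improved} to a slight dilate of $T$, which has spectrum compactly inside $\mathbb{B}_d$. To get started I would record that the joint spectrum $\sigma(T)$ is contained in $\overline{\mathbb{B}}_d$: every character of the commutative unital Banach subalgebra of $M_n$ generated by $T$ composes with $\pi$ to give a character of $A(\mathcal{D}_a(\mathbb{B}_d))$, and, because the spectral radius of a polynomial in $\Mult(\mathcal{D}_a(\mathbb{B}_d))$ coincides with its supremum norm on $\mathbb{B}_d$, any such character is evaluation at a point of the polynomially convex set $\overline{\mathbb{B}}_d$.

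Since polynomials are norm dense in $A(\mathcal{D}_a(\mathbb{B}_d))$ and $\|f\|_\infty \le \|f\|_{\Mult}$ for every $f \in A(\mathcal{D}_a(\mathbb{B}_d))$, it suffices to prove the inequality for a polynomial $p$. For $r \in (0,1)$, the tuple $rT$ has spectrum inside $\mathbb{B}_d$, so Lemma~\ref{lem:Schur_improved} yields $g \in \Mult(\mathcal{D}_a(\mathbb{B}_d))$ with $g(rT) = p(rT)$ and
\[
\|g\|_{\Mult} \le \bigl(2\, C(n')\sqrt{n'}\,\max(1, a^{-1/2})\bigr)^{n-1} \|p\|_\infty =: C(a,n)\,\|p\|_\infty,
\]
where $n' = \lfloor n^2/4 \rfloor + 1$. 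Define $g_r(z) := g(rz)$; by the superposition principle (Proposition~\ref{prop:superposition}), $g(rT) = g_r(T)$, and $g_r$ is holomorphic in a neighborhood of $\overline{\mathbb{B}}_d$.

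The next step is to verify that $g_r \in A(\mathcal{D}_a(\mathbb{B}_d))$ and that $\|g_r\|_{\Mult} \le \|g\|_{\Mult}$. The norm contractivity under dilation reduces, via the standard reproducing-kernel characterization of the multiplier norm, to the positivity of the kernel $\bigl(\frac{1 - r^2\langle z,w\rangle}{1 - \langle z,w\rangle}\bigr)^a$; writing this as $1 + (1-r^2)\sum_{k\ge 1}\langle z,w\rangle^k$ raised to the $a$-th power and checking that the resulting Taylor coefficients in $\langle z,w\rangle$ are non-negative gives the required positivity. Once $g_r \in A(\mathcal{D}_a(\mathbb{B}_d))$, approximation in multiplier norm by polynomials $q_k$ gives $\pi(q_k) \to \pi(g_r)$ and $q_k(T) \to g_r(T)$, hence $\pi(g_r) = g_r(T)$, so
\[
\pi(p_r) = p_r(T) = p(rT) = g(rT) = g_r(T) = \pi(g_r),
\]
and $\|\pi(p_r)\| \le \|\pi\|\,\|g_r\|_{\Mult} \le C(a,n)\,\|\pi\|\,\|p\|_\infty$. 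Letting $r \to 1^-$, $p_r \to p$ in multiplier norm (since $p$ has fixed degree), which yields $\|\pi(p)\| \le C(a,n)\,\|\pi\|\,\|p\|_\infty$; density then upgrades this to all $f \in A(\mathcal{D}_a(\mathbb{B}_d))$.

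The main obstacle I anticipate is the membership claim $g_r \in A(\mathcal{D}_a(\mathbb{B}_d))$ together with the dilation estimate $\|g_r\|_{\Mult} \le \|g\|_{\Mult}$. For $a \ge 1$ both are transparent: monomials $z^\alpha$ have multiplier norm at most $1$, Taylor coefficients of $g_r$ decay geometrically because $g_r$ extends holomorphically to $(1/r)\mathbb{B}_d$, and the Taylor series of $g_r$ therefore converges absolutely in multiplier norm, delivering both conclusions at once. For $a < 1$ the monomial norms grow like $a^{-|\alpha|/2}$, so one has to argue at the kernel level (as sketched above for the norm bound) and invoke a more careful polynomial approximation—e.g. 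Cesaro means of the homogeneous decomposition—to place $g_r$ in $A(\mathcal{D}_a(\mathbb{B}_d))$.
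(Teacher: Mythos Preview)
Your approach is essentially the same as the paper's: both encode $\pi$ by the tuple $T=(\pi(z_1),\ldots,\pi(z_d))$, observe that $\sigma(T)\subset\overline{\mathbb{B}}_d$, dilate to $rT$ to push the spectrum into $\mathbb{B}_d$, apply Lemma~\ref{lem:Schur_improved} to obtain $g$, use that $f\mapsto f_r$ maps $\Mult(\mathcal{D}_a(\mathbb{B}_d))$ contractively into $A(\mathcal{D}_a(\mathbb{B}_d))$ to deduce $\|\pi(p_r)\|\le C(a,n)\|\pi\|\|p\|_\infty$, and let $r\to 1$. The only difference is packaging: the paper simply cites the dilation map $f\mapsto f_r\colon\Mult\to A$ as ``well known'' and phrases the estimate via the composed homomorphism $\pi_r=\pi\circ(\,\cdot\,)_r$, whereas you isolate this map as the main obstacle---your worry about $a<1$ is unnecessary, since bounding the homogeneous parts by $\|g_n\|_{\Mult}\le\|g\|_{\Mult}$ via averaging over the rotations $z\mapsto e^{i\theta}z$ (rather than via monomial norms) gives absolute convergence of the Taylor series of $g_r$ in multiplier norm for every $a>0$, delivering both $g_r\in A(\mathcal{D}_a(\mathbb{B}_d))$ and the identity $\pi(g_r)=g_r(T)$ in one stroke.
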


\begin{proof}
  Let $\pi: A(\mathcal{D}_a(\mathbb{B}_d)) \to M_n$ be a unital bounded homomorphism and let
  $T = (T_1,\ldots,T_d) = (\pi(M_{z_1}), \ldots, \pi(M_{z_d}))$,
  so that $\pi(p) = p(T)$ for all polynomials $p$.
  Recall that the spectrum of $M_z$ in the Banach algebra $A(\mathcal{D}_a(\mathbb{B}_d))$ is contained
in $\overline{\mathbb{B}}_d$
  (for $a \le 1$, this follows for instance from the discussion following Lemma 5.3 in \cite{CH18};
  for $a \ge 1$, it holds since $M_z$ is a row contraction).
Hence the joint spectrum of $T$ is also contained in $\overline{\mathbb{B}}_d$.

  If $f \in \mathcal{O}(\mathbb{B}_d)$ and $0 <r < 1$, let $f_r(z) = f(r z)$.
  It is well known that the map $f \mapsto f_r$ is a unital completely contractive
  homomorphism from $\Mult(\mathcal{D}_a(\mathbb{B}_d))$ into $A(\mathcal{D}_a(\mathbb{B}_d))$. 
  Defining $\pi_r(f) = \pi(f_r)$,
  we obtain a unital homomorphism $\pi_r: \Mult(\mathcal{D}_a(\mathbb{B}_d)) \to M_n$ with
  $\|\pi_r\| \le \|\pi\|$ and $\pi_r(M_z) = r T$.
  Given a polyonomial $p$ and $0 <r < 1$,
  Lemma \ref{lem:Schur_improved} yields a constant $C(a,n)$ and a function
  $g \in \Mult(\mathcal{D}_a(\mathbb{B}_d))$ so that $g(r T) = p( r T)$ and $\|g\|_{\Mult(\mathcal{D}_a(\mathbb{B}_d))} \le C(a,n) \|p\|_\infty$.
  Moreover, since $g_r \in A(\mathcal{D}_a(\mathbb{B}_d))$ and $\pi$ is continuous,
  \begin{equation*}
    \pi_r(p) = p(r T) = g(r T) = \pi( g_r) = \pi_r(g).
  \end{equation*}
  Thus
  \begin{equation*}
    \|\pi_r(p)\| = \|\pi_r(g)\| \le \|\pi_r\| \|g\|_{\Mult(\mathcal{D}_a(\mathbb{B}_d))}
    \le C(a,n) \|\pi\| \|p\|_\infty.
  \end{equation*}
  Taking the limit $r \to 1$, we obtain the desired conclusion for polynomials.
  By continuity, the result follows for all $f \in A(\mathcal{D}_a(\mathbb{D}_d))$.
\end{proof}

We can now answer \cite[Question 10.3]{AHM+20a}.
\begin{thm}\label{thm:not_top_sub}
  Let $0 <a < d$. Then the algebras $A(\mathcal{D}_a(\mathbb{B}_d))$ and $\Mult(\mathcal{D}_a(\mathbb{B}_d))$
  are not topologically subhomogeneous.
\end{thm}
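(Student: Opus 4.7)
The plan is to reduce to showing that $A(\mathcal{D}_a(\mathbb{B}_d))$ is not topologically subhomogeneous, and then to combine Lemma \ref{lem:DA_hom} with the subhomogeneity hypothesis to force a uniform estimate of the multiplier norm by the supremum norm on polynomials, which is known to fail for $0 < a < d$.

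First I would observe that topological subhomogeneity of $\Mult(\mathcal{D}_a(\mathbb{B}_d))$ descends to the norm-closed subalgebra $A(\mathcal{D}_a(\mathbb{B}_d))$: any unital bounded homomorphism $\pi \colon \Mult(\mathcal{D}_a(\mathbb{B}_d)) \to M_k$ with $\|\pi\| \le C$ restricts to a unital bounded homomorphism of $A(\mathcal{D}_a(\mathbb{B}_d))$ of norm at most $C$, so the supremum appearing in the definition of subhomogeneity only grows upon passing to the subalgebra. Hence it suffices to prove the statement for $A(\mathcal{D}_a(\mathbb{B}_d))$.

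Next, I would argue by contradiction: assume that $A(\mathcal{D}_a(\mathbb{B}_d))$ is topologically subhomogeneous with constants $c$, $C$ and size $N$. Lemma \ref{lem:DA_hom} then provides that every unital bounded homomorphism $\pi \colon A(\mathcal{D}_a(\mathbb{B}_d)) \to M_k$ with $k \le N$ and $\|\pi\| \le C$ satisfies $\|\pi(f)\| \le K_0 \|f\|_\infty$, where $K_0 := C \cdot \max_{1 \le k \le N} C(a,k)$. Substituting this estimate into the subhomogeneity inequality yields
\[
\|f\|_{\Mult(\mathcal{D}_a(\mathbb{B}_d))} \le c K_0 \|f\|_\infty \quad \text{for every } f \in A(\mathcal{D}_a(\mathbb{B}_d));
\]
in particular, $\|p\|_{\Mult(\mathcal{D}_a(\mathbb{B}_d))} \le K \|p\|_\infty$ for every polynomial $p$, where $K := c K_0$.

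Finally I would derive a contradiction from the well-known fact that $\Mult(\mathcal{D}_a(\mathbb{B}_d)) \subsetneq H^\infty(\mathbb{B}_d)$ whenever $0 < a < d$. Given the polynomial inequality just obtained, a standard dilation argument---applied to the Taylor polynomials of $f_r(z) = f(rz)$ for $f \in H^\infty(\mathbb{B}_d)$---together with the weak-$*$ closedness of $\Mult(\mathcal{D}_a(\mathbb{B}_d))$ under bounded pointwise convergence, would exhibit every $f \in H^\infty(\mathbb{B}_d)$ as a multiplier of $\mathcal{D}_a(\mathbb{B}_d)$ with norm at most $K \|f\|_\infty$. This gives the equality $\Mult(\mathcal{D}_a(\mathbb{B}_d)) = H^\infty(\mathbb{B}_d)$, contradicting $a < d$. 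The main obstacle in this argument is the control of finite-dimensional representations by the supremum norm, and this has already been surmounted in Lemma \ref{lem:DA_hom}; once that lemma is in hand, only the routine approximation step in the final contradiction remains.
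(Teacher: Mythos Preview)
Your proof is correct and follows essentially the same approach as the paper: both reduce to $A(\mathcal{D}_a(\mathbb{B}_d))$, then combine Lemma \ref{lem:DA_hom} with the subhomogeneity hypothesis to obtain a bound $\|p\|_{\Mult(\mathcal{D}_a(\mathbb{B}_d))} \le K \|p\|_\infty$ on polynomials, and derive a contradiction. The only difference is cosmetic: the paper cites directly the fact (from \cite[Proposition~9.7]{AHM+20a}) that for $0<a<d$ the multiplier norm is not dominated by a constant times the supremum norm on polynomials, whereas you re-derive this from the strict containment $\Mult(\mathcal{D}_a(\mathbb{B}_d)) \subsetneq H^\infty(\mathbb{B}_d)$ via a routine approximation argument; either route is fine.
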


\begin{proof}
  If $0 < a < d$, then the multiplier norm on $\mathcal{D}_a(\mathbb{B}_d)$ is not dominated
  by a constant times the the supremum norm for polynomials; see
  \cite[Proposition 9.7]{AHM+20a} and its proof.
  Thus, it follows from Lemma \ref{lem:DA_hom} that $A(\mathcal{D}_a(\mathbb{B}_d))$ is not
  topologically subhomogeneous. Hence, the larger algebra $\Mult(\mathcal{D}_a(\mathbb{B}_d))$
  is not topologically subhomogeneous either.
\end{proof}

We also see that for $a > 0$, the $n$-point multiplier norm on $\mathcal{D}_a(\mathbb{B}_d)$
is comparable to the supremum norm. If $d=1$, this was shown in 
\cite[Corollary 3.3]{AHM+20a}.

\begin{cor}
  Let $a > 0$. Then there exists a constant $C(a,n)$ so that
  \begin{equation*}
    \|f\|_\infty \le \|f\|_{\Mult(\mathcal{D}_a(\mathbb{B}_d)),n} \le C(a,n)
    \|f\|_\infty
  \end{equation*}
  for all $f \in H^\infty(\mathbb{B}_d)$.
\end{cor}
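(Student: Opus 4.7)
The lower bound is immediate: for any singleton $F = \{w\} \subset \mathbb{B}_d$, the restricted space $\mathcal{D}_a(\mathbb{B}_d)\big|_F$ is one-dimensional, so $\|f\big|_F\|_{\Mult(\mathcal{D}_a|_F)} = |f(w)|$. Since $|F| = 1 \le n$ for every $n \ge 1$, taking the supremum over $F$ gives $\|f\|_\infty \le \|f\|_{\Mult(\mathcal{D}_a(\mathbb{B}_d)),n}$.

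For the upper bound, the plan is to reduce to Lemma \ref{lem:Schur_improved} via a diagonal matrix realization of an $n$-point set. Fix $F = \{\lambda_1,\ldots,\lambda_n\} \subset \mathbb{B}_d$ with $|F| = n$, and define a jointly diagonal commuting $d$-tuple $T = (T_1,\ldots,T_d)$ of $n \times n$ matrices by letting $T_j$ be the diagonal matrix whose $i$-th diagonal entry is the $j$-th coordinate of $\lambda_i$. Then $T$ is jointly diagonalizable with joint spectrum $\sigma(T) = F \subset \mathbb{B}_d$, and for any holomorphic $h$ on a neighborhood of $\overline{\mathbb{B}}_d$, $h(T)$ is the diagonal matrix with entries $h(\lambda_1),\ldots,h(\lambda_n)$.

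Given $f \in H^\infty(\mathbb{B}_d)$, apply Lemma \ref{lem:Schur_improved} to obtain a multiplier $g \in \Mult(\mathcal{D}_a(\mathbb{B}_d))$ with $g(T) = f(T)$ and
\[
  \|g\|_{\Mult(\mathcal{D}_a(\mathbb{B}_d))} \le C(a,n) \|f\|_\infty,
\]
where $C(a,n) := (2 C(n') \sqrt{n'} \max(1,a^{-1/2}))^{n-1}$ with $n' = \lfloor n^2/4 \rfloor + 1$; crucially, this bound is independent of $d$. Because $T$ is diagonalizable with spectrum $F$, the identity $g(T) = f(T)$ forces $g(\lambda_i) = f(\lambda_i)$ for every $i$, that is, $g\big|_F = f\big|_F$. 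One small technicality: Lemma \ref{lem:Schur_improved} is stated for $f \in H^\infty(\mathbb{B}_d)$, so no approximation is needed; however the diagonal functional calculus equality $h(T) = \mathrm{diag}(h(\lambda_i))$ should be invoked for both $f$ and $g$ to conclude the pointwise agreement.

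Finally, since restriction $\Mult(\mathcal{D}_a(\mathbb{B}_d)) \to \Mult(\mathcal{D}_a(\mathbb{B}_d)\big|_F)$ is contractive,
\[
  \|f\big|_F\|_{\Mult(\mathcal{D}_a|_F)} = \|g\big|_F\|_{\Mult(\mathcal{D}_a|_F)} \le \|g\|_{\Mult(\mathcal{D}_a(\mathbb{B}_d))} \le C(a,n) \|f\|_\infty.
\]
Taking the supremum over all $F \subset \mathbb{B}_d$ with $|F| \le n$ yields $\|f\|_{\Mult(\mathcal{D}_a(\mathbb{B}_d)),n} \le C(a,n) \|f\|_\infty$. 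There is no real obstacle here beyond correctly assembling the ingredients; the substantive work is already encapsulated in Lemma \ref{lem:Schur_improved}, and the only thing to verify carefully is that the $d$-independent bound on $\|g\|$ is indeed available (which it is, via the $C(n')\sqrt{n'}$ branch of the minimum).
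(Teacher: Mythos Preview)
Your proof is correct and follows essentially the same approach as the paper: apply Lemma \ref{lem:Schur_improved} to the diagonal tuple $T$ built from the points of $F$ to obtain $g$ with $g\big|_F = f\big|_F$ and controlled multiplier norm, then use contractivity of restriction. Your explicit identification of the $d$-independent constant via the $C(n')\sqrt{n'}$ branch is a welcome clarification that the paper leaves implicit.
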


\begin{proof}
  The first inequality is trivial.
  For the second inequality, let $f \in H^\infty(\mathbb{B}_d)$ and
  suppose that $F \subset \mathbb{B}_d$ with $|F| \le n$.
  Applying Lemma \ref{lem:Schur_improved} to the diagonal tuple $T = (T_1,\ldots,T_d) \in M_n^d$
  whose entries are the points of $F$,
  we obtain a constant $C(a,n)$ and $g \in \Mult(\mathcal{D}_a(\mathbb{B}_d))$ with $g \big|_F = f \big|_F$
  and $\|g\|_{\Mult(\mathcal{D}_a(\mathbb{B}_d))} \le C(a,n) \|f\|_\infty$. Hence
  \begin{equation*}
    \|f \big|_F\|_{\Mult(\mathcal{D}_a(\mathbb{B}_d) \big|_F)} =
    \|g \big|_F\|_{\Mult(\mathcal{D}_a(\mathbb{B}_d) \big|_F)} \le \|g\|_{\Mult(\mathcal{D}_a(\mathbb{B}_d))}
    \le C(a,n) \|f\|_\infty.
  \end{equation*}
  Hence the second inequality holds.
\end{proof}

We now use Theorem \ref{thm:vNrow} and Lemma \ref{lem:DA_hom} to determine the finite dimensional representations of the algebras $A(\mathcal{D}_a(\mathbb{B}_d))$. 
Let $A(\bB_d)$ denote the {\em ball algebra}, that is, the algebra of holomorphic functions in $\bB_d$ which extend continuously to $\ol{\bB}_d$. 
The ball algebra is the closure in $H^\infty(\bB_d)$, with respect to the supremum norm, of the polynomials. 
Recall that if $a \ge d$, then $A(\mathcal{D}_a(\mathbb{B}_d)) = A(\mathbb{B}_d)$ completely
isometrically.

\begin{thm}\label{thm:reps_same}
  For all $a > 0$, the unital bounded $n$-dimensional representations of $A(\mathcal{D}_a(\mathbb{B}_d))$ coincide with those of $A(\bB_d)$. 
  Every such representation is uniquely determined by a $d$-tuple $T$, which is jointly similar to a row contraction, and such that $\pi(p) = p(T)$ for every polynomial $p \in \bC[z_1, \ldots, z_d]$. 
\end{thm}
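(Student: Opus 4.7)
The plan is to establish the correspondence of representations in two steps, then address the similarity to a row contraction as a third step, combining Lemma \ref{lem:DA_hom} with standard similarity theory for operator algebras.

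First, for a unital bounded representation $\pi: A(\mathcal{D}_a(\bB_d)) \to M_n$, Lemma \ref{lem:DA_hom} gives $\|\pi(p)\| \le C(a,n)\|\pi\|\|p\|_\infty$ for every polynomial $p$. Thus $\pi|_{\bC[z_1,\ldots,z_d]}$ is sup-norm bounded, and since polynomials are sup-norm dense in $A(\bB_d)$, it extends uniquely by continuity to a bounded unital representation $\tilde\pi: A(\bB_d) \to M_n$. Conversely, given a unital bounded $\sigma: A(\bB_d) \to M_n$, the trivial inequality $\|p\|_\infty \le \|p\|_{\Mult(\mathcal{D}_a(\bB_d))}$ shows $\sigma|_{\bC[z]}$ is multiplier-norm bounded, so extends by density to a bounded unital representation of $A(\mathcal{D}_a(\bB_d))$. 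Both constructions coincide on polynomials with evaluation at the commuting tuple $T := (\pi(M_{z_1}),\ldots,\pi(M_{z_d}))$, hence are mutual inverses, and $T$ determines $\pi$ uniquely.

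The main obstacle is showing that $T$ is jointly similar to a row contraction. My plan here is to invoke Paulsen's similarity theorem. Since $\tilde\pi: A(\bB_d) \to M_n$ maps into a matrix algebra, Smith's lemma ensures it is completely bounded, with $\|\tilde\pi\|_{cb} \le n\|\tilde\pi\|$. Paulsen's similarity theorem then produces an invertible $S \in M_n$ such that $\rho := S\tilde\pi(\cdot)S^{-1}$ is completely contractive on $A(\bB_d)$. The row $(z_1,\ldots,z_d)$ has matrix norm $\sup_{z\in\overline{\bB_d}} \|z\|_{\bC^d} = 1$ in $M_{1,d}(A(\bB_d))$, so complete contractivity of $\rho$ forces $(\rho(z_1),\ldots,\rho(z_d))$ to be a row contraction. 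Hence $T$ is similar to the row contraction $\rho(M_z)$. For the converse direction, any commuting $n \times n$ row contraction $T'$ induces a bounded unital representation $p \mapsto p(T')$ of $A(\bB_d)$ (and therefore of $A(\mathcal{D}_a(\bB_d))$) by Theorem \ref{thm:vNrow}, confirming that every similarity class of row contractions does arise this way.
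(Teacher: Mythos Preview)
Your argument is correct. The first half, establishing the bijection between bounded finite-dimensional representations of $A(\mathcal{D}_a(\mathbb{B}_d))$ and of $A(\mathbb{B}_d)$ via Lemma \ref{lem:DA_hom} and the trivial inequality $\|\cdot\|_\infty \le \|\cdot\|_{\Mult}$, is essentially identical to the paper's proof.

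The second half differs. The paper observes that, once all the algebras $A(\mathcal{D}_a(\mathbb{B}_d))$ share the same finite-dimensional representations, it suffices to classify those of $\mathcal{A}_d = A(\mathcal{D}_1(\mathbb{B}_d))$, and then cites \cite[Proposition 10.1]{SalShaSha20} for the characterization in terms of tuples similar to a row contraction. You instead work directly with the extension $\tilde\pi: A(\mathbb{B}_d) \to M_n$: the standard fact that bounded maps into $M_n$ are completely bounded (this is what the paper itself invokes in Corollary \ref{cor:vNrow_comp}, citing \cite[Corollary 2.2.4]{ER00}; calling it Smith's lemma is slightly imprecise but the content is right) feeds into Paulsen's similarity theorem to produce a completely contractive $\rho$, and then the row $[z_1,\ldots,z_d]$ having norm $1$ in $M_{1,d}(A(\mathbb{B}_d))$ forces $\rho(M_z)$ to be a row contraction. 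For the converse you use Theorem \ref{thm:vNrow}. This route is more self-contained, relying only on classical operator-space tools and results already proved in the paper, at the cost of a few more lines; the paper's route is shorter but defers the work to an external reference.
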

\begin{proof}
Suppose that $\pi: A(\mathcal{D}_a(\mathbb{B}_d)) \to M_n$ is a bounded unital homomorphism. 
By the first part of the proof of Lemma \ref{lem:DA_hom}, there is a $d$-tuple $T$ with $\sigma(T) \subseteq \ol{\bB}_d$, such that $\pi(p) = p(T)$ for every polynomial. 
By Lemma \ref{lem:DA_hom}, this extends uniquely to a continuous unital representation of $A(\bB_d)$. 
Indeed, when can simply define $f(T) := \pi(f) := \lim\pi(p_n)$ where $p_n$ are polynomials that converge to $f$ uniformly on the closed ball. 

Conversly, if $\pi: A(\bB_d) \to M_n$ is a bounded unital homomorphism, then it restricts to the subalgebra $A(\cD_a(\bB_d))$, and because the multiplier norm is always bigger than the supremum norm, $\pi\big|_{A(\cD_a(\bB_d))}$ is also bounded. 

Finally, since the bounded and unital $n$-dimensional representations of all the algebras $A(\cD_a(\bB_d))$ coincide, and since a representation is clearly determined by the images of the coordinate functions $T_1 = \pi(z_1), \ldots, T_d = \pi(z_d)$, it suffices to identify the representations of $\cA_d := A(\mathcal{D}_1(\mathbb{B}_d))$. 
However, by \cite[Proposition 10.1]{SalShaSha20}, the bounded representations of $\cA_d$ are in one to one correspondence with the $d$-tuples $T$ which are jointly similar to a row contraction.
\end{proof}

\begin{rem}
It is natural to ask whether one may replace the condition for the $d$-tuple $T$ to be similar to a row contraction, with the condition that $\sigma(T) \subseteq \ol{\bB}_d$.
However, this is not true. 
For example, consider the case $d=1$ and $T = \big(\begin{smallmatrix} 1 &1 \\ 0 & 1 \end{smallmatrix} \big)$ . 
Then $\sigma(T) = \{1\} \subseteq \ol{\bD}$, but $T$ is not similar to a contraction. 
Indeed, $T$ does not define a bounded representation of $A(\bD)$, as $T$ is not power bounded.
\end{rem}

\section{An application to uniform continuity of noncommutative functions}\label{sec:application}

In this section we use our previous results to answer an open question regarding uniform continuity of noncommutative (nc) functions on the nc unit ball \cite{SalShaSha18}. 
For a thorough introduction to the theory of nc functions, see \cite{KVV14}; the beginner might prefer to start with the expository paper \cite{AM16}.

The multiplier algebra $\Mult(H^2_d)$ can be identified (via the functional calculus) with the algebra $H^\infty(\mathfrak{C}\mathfrak{B}_d)$ of bounded nc holomorphic functions on the nc variety $\mathfrak{C}\mathfrak{B}_d = \sqcup_{n=1}^\infty \CRCo$, and we have $\|f\|_{\Mult(H^2_d)} = \sup_{T \in \mathfrak{C}\mathfrak{B}_d}\|f(T)\|$; for a discussion of this point of view see \cite{SalShaSha18} (in particular Section 11).

For brevity and to be compatible with other parts of the literature, let us write $\cA_d$ for the norm closure of the polynomials in $\Mult(H^2_d)$, that is $\cA_d$ will be just shorthand for $A(\cD_1(\bB_d))$. 
By \cite[Corollary 9.4]{SalShaSha18}, $\cA_d$ equals the subalgebra of $H^\infty(\mathfrak{C}\mathfrak{B}_d)$ consisting of all bounded nc functions that extend uniformly continuously to $\ol{\mathfrak{C}\mathfrak{B}_d} = \sqcup_{n=1}^\infty \CRC$. 
Here, $f \in H^\infty(\mathfrak{C}\mathfrak{B}_d)$ is said to be {\em uniformly continuous} if for every $\epsilon > 0$, there exists a $\delta > 0$ such that for all $n$ and all $X,Y \in \CRC$, $\|X-Y\| < \delta$ implies $\|f(X) - f(Y)\|<\epsilon$. 

By Theorem \ref{thm:reps_same}, every row contraction $T \in \CRC$ gives rise to a a bounded unital representation of $A(\bB_d)$, which we denote $f \mapsto f(T)$. 

\begin{prop}\label{prop:levevelwise_uc}
For every $f \in A(\bB_d)$ and every $n$, the map
\begin{equation*}
  \CRC \to M_n, \quad T \mapsto f(T),
\end{equation*}
is uniformly continuous, in the sense that for all $n$, and for every $\epsilon > 0$, there exists a $\delta > 0$ such that $X,Y \in \CRC$ and $\|X-Y\| < \delta$ implies $\|f(X) - f(Y)\|<\epsilon$. 
\end{prop}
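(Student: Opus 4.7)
The plan is to exploit the fact that $\CRC$, being a closed and bounded subset of the finite-dimensional space $M_n(\bC)^d$, is compact. For compact domains, continuity implies uniform continuity, so it suffices to show that the map $T \mapsto f(T)$ is continuous on $\CRC$.

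Continuity will be proved by polynomial approximation. First, for any polynomial $p \in \bC[z_1,\ldots,z_d]$, the map $T \mapsto p(T)$ is literally polynomial in the matrix entries of $T_1,\ldots,T_d$, hence continuous. Now given $f \in A(\bB_d)$, choose polynomials $p_k$ with $\|p_k - f\|_\infty \to 0$. The key estimate, valid uniformly over $T \in \CRC$, is
\[
\|f(T) - p_k(T)\| \le C_{d,n}\, \|f - p_k\|_\infty,
\]
where $C_{d,n}$ is the constant from Theorem \ref{thm:vNrow}. For polynomials $p$, this is exactly Theorem \ref{thm:vNrow}; the extension to $f \in A(\bB_d)$ is built into the definition of $f(T)$ for $T \in \CRC$ provided by Theorem \ref{thm:reps_same}, where the representation $\pi_T : A(\bB_d) \to M_n$ with $\pi_T(p) = p(T)$ is obtained precisely by taking uniform limits of polynomials and whose norm is controlled by $C_{d,n}$.

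Consequently, the continuous functions $T \mapsto p_k(T)$ converge uniformly on $\CRC$ to $T \mapsto f(T)$. Being a uniform limit of continuous maps on the compact set $\CRC$, the map $T \mapsto f(T)$ is continuous and hence uniformly continuous, which is the desired conclusion.

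The main (and essentially only) obstacle is ensuring that the approximation by polynomials produces a uniform bound on $\CRC$; this is exactly what Theorem \ref{thm:vNrow} delivers. Without such a $T$-independent constant, pointwise continuity of $T \mapsto f(T)$ would still hold via Theorem \ref{thm:reps_same}, but we could not conclude uniform convergence of the polynomial approximations and hence could not promote continuity of the individual $p_k(T)$ to continuity of $f(T)$.
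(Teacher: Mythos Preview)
Your proposal is correct and follows essentially the same approach as the paper: approximate $f$ uniformly by polynomials, use Theorem \ref{thm:vNrow} to get uniform convergence of $p_k(T)$ to $f(T)$ on $\CRC$, and then pass the regularity of polynomials to the limit. The only cosmetic difference is that the paper notes directly that each polynomial is uniformly continuous on $\CRC$ and that a uniform limit of uniformly continuous maps is uniformly continuous, whereas you route through compactness of $\CRC$ to upgrade continuity to uniform continuity; both arguments are equally valid and rely on the same key input.
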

\begin{proof}
It is clear that every $p \in \bC[z_1, \ldots,z_d]$ can be evaluated at every $T \in \CRC$, and that $p$ is uniformly continuous on $\CRC$. 
Moreover, $A(\mathbb{B}_d)$ is the closure of $\mathbb{C}[z_1,\ldots,z_d]$ with respect to the supremum norm.
Therefore, if $f \in A(\mathbb{B}_d)$ and $p_n \in \bC[z_1, \ldots, z_d]$ is a sequence of polynomials that converges in the supremum norm to $f$, then Theorem \ref{thm:vNrow} implies that $p_n(T)$ converges in norm to $f(T)$, and the convergence is uniform in $T \in \CRC$.
As the uniform limit of the uniformly continuous functions $p_n : \CRC \to M_n$, the function $f: \CRC \to M_n$ is also uniformly continuous. 
\end{proof}

By the proposition, every $f \in A(\bB_d)$ extends to a function on $\mathfrak{C}\mathfrak{B}_d$, and since $f(T)$ is given by the functional calculus (see the appendix) it is not hard to see that $f$ is actually an nc function. 
Moreover, $f$ is {\em levelwise bounded} and also {\em levelwise uniformly continuous}, in the obvious sense. 
However, there are functions in $A(\bB_d)$ which are not multipliers, and hence not uniformly bounded on $\mathfrak{C}\mathfrak{B}_d$ (see Section 3.7 ``The strict containment $\cM_d \subsetneq H^\infty(\bB_d)$'' in \cite{ShalitSurvey}).

Since bounded noncommutative functions have some remarkable regularity properties, it might seem plausible that an nc function that is both globally bounded and uniformly continuous on every level $\CRCo$, will be forced somehow to be uniformly continuous on $\mathfrak{C}\mathfrak{B}_d$. 
Question 9.16 in \cite{SalShaSha18} asked whether there exist functions in $H^\infty(\mathfrak{C}\mathfrak{B})$ that are levelwise uniformly continuous but not uniformly continuous. 
We can now show that the answer to this question is positive.

\begin{prop}\label{prop:lw_uniform}
There exists a function $f \in \Mult(H^2_d) = H^\infty(\mathfrak{C}\mathfrak{B}_d)$ which is levelwise uniformly continuous, but not uniformly continuous on $\mathfrak{C}\mathfrak{B}_d$.
\end{prop}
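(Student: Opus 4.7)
The plan is to reduce the problem to exhibiting an $f \in A(\mathbb{B}_d) \cap \Mult(H^2_d) \setminus \mathcal{A}_d$. From such an $f$, all three desired properties fall out: $f \in \Mult(H^2_d) = H^\infty(\mathfrak{C}\mathfrak{B}_d)$ is by definition a bounded nc holomorphic function on $\mathfrak{C}\mathfrak{B}_d$; the hypothesis $f \in A(\mathbb{B}_d)$, combined with Proposition \ref{prop:levevelwise_uc}, guarantees that for each fixed $n$ the map $T \mapsto f(T)$ is uniformly continuous on $\CRC$ and hence on $\CRCo$, so $f$ is levelwise uniformly continuous; and if $f$ were uniformly continuous on $\mathfrak{C}\mathfrak{B}_d$, it would extend uniformly continuously to $\overline{\mathfrak{C}\mathfrak{B}_d}$, placing $f$ in $\mathcal{A}_d$ by \cite[Corollary 9.4]{SalShaSha18}, contradicting the standing assumption.

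The substantive content is the construction. The engine is the severe incomparability of $\|\cdot\|_\infty$ and $\|\cdot\|_{\Mult(H^2_d)}$ on polynomials for $d \ge 2$: by the Remark after Theorem \ref{thm:vNrow}, there exist polynomials $(q_n)$ with $\|q_n\|_\infty \le 1$ but $\|q_n\|_{\Mult(H^2_d)} \to \infty$. I would look for $f$ as a series $f = \sum_{n} c_n q_n$ with positive scalars $c_n$ satisfying $\sum c_n < \infty$, which guarantees sup-norm convergence and places $f \in A(\mathbb{B}_d)$. To keep $f$ inside $\Mult(H^2_d)$, one wants the partial sums to remain uniformly bounded in multiplier norm despite the divergence of $\|q_n\|_{\Mult}$; a natural way to arrange this is to choose the $q_n$ to be homogeneous polynomials of Hadamard-lacunary degrees, so that their Taylor supports are disjoint and the multiplier norm of the partial sum obeys an $\ell^2$-type rather than $\ell^1$-type estimate. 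A weak-$*$ compactness argument then places the limit $f$ in $\Mult(H^2_d)$.

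The delicate step --- and, to my mind, the main obstacle --- is certifying that this $f$ falls outside $\mathcal{A}_d$. I would argue by contradiction: a polynomial approximant $r$ with $\|f - r\|_{\Mult}$ small would, when evaluated on the Arveson-type witness row contractions (the compressions of $M_z$ on symmetric Fock space to polynomials of bounded degree discussed in the Remark after Theorem \ref{thm:vNrow}) that realize the large values of $\|q_n\|_{\Mult}$, be forced to replicate the action of the tail of the series, yielding a contradiction with the divergence of $\|q_n\|_{\Mult}$. Pinning down the correct witness tuples and turning this heuristic into a quantitative estimate is where the real technical work lies.
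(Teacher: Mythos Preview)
Your reduction is exactly the paper's: exhibit $f \in \Mult(H^2_d) \cap A(\mathbb{B}_d)$ with $f \notin \mathcal{A}_d$, then invoke Proposition \ref{prop:levevelwise_uc} for levelwise uniform continuity and \cite[Corollary 9.4]{SalShaSha18} to rule out global uniform continuity. The paper, however, does not construct such an $f$ from scratch; it simply cites \cite[Section 5.2]{ShalitSurvey}, where the existence of a function in $\Mult(H^2_d) \cap C(\overline{\mathbb{B}}_d) \setminus \mathcal{A}_d$ is deduced from the methods of \cite{FangXia11}. So once the reduction is made, the proof in the paper is one sentence.

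Your proposed construction is in the right spirit (and indeed resembles what underlies the cited literature), but as written it is a program rather than a proof, with two concrete gaps. First, the assertion that Hadamard-lacunary homogeneous supports yield an $\ell^2$-type bound on the multiplier norm of the sum is not a standard fact for $\Mult(H^2_d)$; the multiplier norm does not decouple across homogeneous components in any obvious way, and justifying such an estimate is itself nontrivial. Second, as you yourself flag, the step certifying $f \notin \mathcal{A}_d$ is only a heuristic: the witness row contractions you mention detect large $\|q_n\|_{\Mult}$, but turning this into a lower bound on $\|f - r\|_{\Mult}$ for \emph{every} polynomial $r$ requires controlling the interaction of $r$ with all the tail terms simultaneously, which you have not done. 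The reduction portion of your argument is complete and matches the paper; for the existence of the required $f$, the paper's route of citing existing results is both shorter and avoids the technical work you have left open.
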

\begin{proof}
By Proposition \ref{prop:levevelwise_uc} every function in $A(\bB_d)$ is levelwise uniformly continuous. 
However, we know that a bounded nc function is uniformly continuous on $\mathfrak{C}\mathfrak{B}_d$ if and only if it is in $\cA_d$. 
Thus it all boils down to the question whether there exists a function $f \in \Mult(H^2_d) \cap A(\bB_d) = \Mult(H^2_d) \cap C(\ol{\bB}_d)$ which is not in $\cA_d$. 
The existence of such a function was established in Section 5.2 of \cite{ShalitSurvey} (``Continuous multipliers versus $\cA_d$''), where it was explained how this follows from the methods of \cite{FangXia11}. 
\end{proof}

\appendix
\section{An elementary construction of a holomorphic functional calculus for balls}

The following result is a special case of the Arens--Calderon functional calculus.
If $B \subset \mathbb{C}^d$ is an open ball, we equip $\mathcal{O}(B)$ with the topology of of uniform convergence
on compact subsets of $B$.

\begin{thm}
  \label{thm:fc_elementary}
  Let $\mathfrak{B}$ be a commutative unital Banach algebra, let $a = (a_1,\ldots,a_d) \in \mathfrak{B}^d$ and let $B$ be an open ball
  containing the joint spectrum $\sigma_{\mathfrak{B}}(a)$.
  Then there exists a unique continuous homomorphism
  $\Phi: \mathcal{O}(B) \to \mathfrak{B}$ such that $\Phi(p) = p(a)$ for every polynomial $p \in \mathbb{C}[z_1,\ldots,z_d]$.
\end{thm}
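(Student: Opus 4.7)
Uniqueness is immediate from the density of $\mathbb{C}[z_1,\ldots,z_d]$ in $\mathcal{O}(B)$ in the compact-open topology: every $f\in\mathcal{O}(B)$ is the limit of the partial sums of its Taylor expansion about the center of $B$, so any two continuous homomorphisms $\mathcal{O}(B)\to\mathfrak{B}$ agreeing on polynomials must coincide.

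For existence, my plan is to construct $\Phi$ as an explicit Bochner integral built from the Szeg\H o kernel of a slightly smaller ball. After an affine translation we may assume $B=B(0,r)$. Since $\sigma_{\mathfrak{B}}(a)\subset B$ is compact, $\rho:=\sup\{|z|:z\in\sigma_{\mathfrak{B}}(a)\}<r$; fix $R\in(\rho,r)$. For each $\zeta\in\partial B(0,R)$, the element $\langle a,\zeta\rangle:=\sum_{i=1}^d \bar\zeta_i\,a_i\in\mathfrak{B}$ has spectrum $\{\langle\chi(a),\zeta\rangle:\chi\in\Delta(\mathfrak{B})\}$, contained in $\{|w|\le\rho R\}$ by Cauchy--Schwarz. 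Hence $R^2-\langle a,\zeta\rangle$ is invertible in $\mathfrak{B}$, and
\[
K(a,\zeta):=\Bigl(1-\tfrac{\langle a,\zeta\rangle}{R^2}\Bigr)^{-d}=\sum_{n=0}^\infty\binom{n+d-1}{d-1}\,\frac{\langle a,\zeta\rangle^n}{R^{2n}}\in\mathfrak{B}.
\]
I would then set
\[
\Phi(f):=c_d\int_{\partial B(0,R)}f(\zeta)\,K(a,\zeta)\,d\sigma(\zeta) \qquad (f\in\mathcal{O}(B)),
\]
interpreted as a Bochner integral, where $d\sigma$ and $c_d$ are the normalizations making $c_d(1-\langle z,\zeta\rangle/R^2)^{-d}$ the scalar Szeg\H o reproducing kernel of $\partial B(0,R)$.

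The main technical hurdle is showing that the Neumann series for $K(a,\zeta)$ converges in $\mathfrak{B}$ \emph{uniformly} in $\zeta\in\partial B(0,R)$, so that $K(a,\cdot)$ is norm-continuous and the integral is well defined. I plan a Dini-type argument: the continuous functions $\zeta\mapsto\|\langle a,\zeta\rangle^{m!}\|^{1/m!}$ decrease pointwise on the compact sphere (since $\|b^{(m+1)!}\|\le\|b^{m!}\|^{m+1}$) to the continuous limit $r(\langle a,\zeta\rangle)=\max\{|\langle z,\zeta\rangle|:z\in\sigma_{\mathfrak{B}}(a)\}\le\rho R$, so Dini's theorem upgrades the convergence to uniform. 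An elementary interpolation across consecutive factorials then yields $\|\langle a,\zeta\rangle^n\|\le C_\varepsilon(\rho R+\varepsilon)^n$ for all large $n$, uniformly in $\zeta$; since $(\rho R+\varepsilon)/R^2<1$ for $\varepsilon$ small, this gives the required uniform geometric convergence.

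It remains to verify $\Phi(p)=p(a)$ for every polynomial $p$ and the multiplicativity of $\Phi$. For the first, I would substitute the series for $K(a,\zeta)$ into $\Phi(p)$, exchange sum and integral (permitted by the uniform convergence just established), and match coefficients against the scalar reproducing identity $p(z)=c_d\int p(\zeta)(1-\langle z,\zeta\rangle/R^2)^{-d}d\sigma(\zeta)$, obtaining $\sum_\alpha\bigl([z^\alpha]p\bigr)a^\alpha=p(a)$. Multiplicativity then follows by a standard density argument: the set $\{(f,g)\in\mathcal{O}(B)\times\mathcal{O}(B):\Phi(fg)=\Phi(f)\Phi(g)\}$ is closed by continuity of $\Phi$ and of multiplication, and contains all pairs of polynomials, so equals everything.
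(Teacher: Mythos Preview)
Your proposal is correct and follows essentially the same route as the paper: both construct $\Phi$ via the Cauchy--Szeg\H{o} integral $\int f(\zeta)\,(1-\langle a,\zeta\rangle)^{-d}\,d\sigma(\zeta)$, expand the kernel in a binomial series, and verify $\Phi(z^\beta)=a^\beta$ using the orthogonality/reproducing relations on the sphere, with multiplicativity then following by density. The only notable difference is how the uniform-in-$\zeta$ convergence of the kernel series is obtained: you use a Dini argument on $\|\langle a,\zeta\rangle^{m!}\|^{1/m!}$ followed by interpolation, whereas the paper simply applies the spectral radius formula to the element $\zeta\mapsto\langle a,\zeta\rangle$ in the Banach algebra $C(\partial\mathbb{B}_d,\mathfrak{B})$, which gives $\limsup_n\sup_\zeta\|\langle a,\zeta\rangle^n\|^{1/n}<1$ in one stroke and avoids the factorial interpolation.
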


\begin{proof}
  Uniqueness follows from density of the polynomials in $\mathcal{O}(B)$, as $B$ is a ball.
  To show existence, we define $\Phi$ by adapting the Cauchy integral formula for balls, see \cite[Section 3.2]{Rudin08}.
  Applying a translation and replacing $a$ with $r a$ for a suitable number $r \in (0,\infty)$, it suffices to show that
  if $\sigma(a) \subset \mathbb{B}_d$ and $R > 1$, then there exists a continuous homomorphism
  $\Phi: \mathcal{O}(B_R(0)) \to \mathfrak{B}$ extending the polynomial functional calculus.

  Given $\zeta = (\zeta_1,\ldots,\zeta_d) \in \partial \mathbb{B}_d$, define
  \begin{equation*}
    \langle a,\zeta \rangle  = \sum_{i=1}^d a_i \overline{\zeta_i}.
  \end{equation*}
  Since $\sigma(a) \subset \mathbb{B}_d$ is compact, the Cauchy-Schwarz inequality in $\mathbb{C}^d$ implies that $\sigma( \langle a,\zeta \rangle ) \subset \{\lambda\in \mathbb C: |\lambda|\le s\}$ for some $0<s<1$ 
  for all $\zeta \in \partial \mathbb{B}_d$. Using continuity of the inverse in $\mathfrak{B}$, we may therefore define
  \begin{equation*}
    \Phi(f) = \int_{\partial \mathbb{B}_d} f(\zeta) (1 - \langle a, \zeta \rangle)^{-d} d \sigma(\zeta) \quad (f \in \mathcal{O}(B_R(0)),
  \end{equation*}
  where $d \sigma$ is the normalized surface measure on $\partial \mathbb{B}_d$.
  It is clear that $\Phi$ is linear. Moreover,
  \begin{equation*}
    \|\Phi(f)\| \le \sup_{\zeta \in \partial \mathbb{B}_d} |f(\zeta)| \sup_{\zeta \in \partial \mathbb{B}_d} \| ( 1 - \langle a, \zeta \rangle)^{-d}\|,
  \end{equation*}
  where the last factor is finite by compactness of $\partial \mathbb{B}_d$ and continuity of the inverse in $\mathfrak{B}$.
  Thus, $\Phi$ is continuous.

  We finish the proof by showing that $\Phi(z^\beta) = a^\beta$ for every monomial $z^\beta$. Since the polynomial functional calculus
  is a homomorphism, it then follows that $\Phi$ is a homomorphism as well.
  As $\sigma( \langle a,\zeta \rangle) \subset \{\lambda \in \mathbb C:|\lambda|\le s\} $ for all $\zeta \in \partial \mathbb{B}_d$,
  the spectrum of $\zeta \mapsto \langle a ,\zeta \rangle$ in the Banach algebra $C(\partial \mathbb{B}_d,\mathfrak{B})$
  of all
  continuous functions from $\partial \mathbb{B}_d$ into $\mathfrak{B}$ is contained in $\{\lambda \in \mathbb C:|\lambda|\le s\}$ as well.
  Applying the spectral radius formula in $C(\partial \mathbb{B}_d,\mathfrak{B})$,
  we find that $\lim_{n \to \infty} \sup_{\zeta \in \partial \mathbb{B}_d} \| \langle a,\zeta \rangle^n\|^{1/n} \le s < 1$.
  Consequently, we may expand $(1 - \langle a,\zeta \rangle)^{-d}$ into a binomial series that
  converges uniformly in $\zeta \in \partial \mathbb{B}_d$, so
  \begin{equation*}
    \Phi(z^\beta)
    = \sum_{n=0}^\infty \binom{d+n-1}{n} \sum_{|\alpha| = n} \binom{n}{\alpha} a^\alpha \int_{\partial \mathbb{B}_d} \zeta^\beta \overline{\zeta}^\alpha d \sigma(\zeta).
  \end{equation*}
  A basic orthogonality relation
  for the surface integral (Propositions 1.4.8 and 1.4.9 in \cite{Rudin08}) shows that
  \begin{equation*}
    \int_{\partial \mathbb{B}_d} \zeta^\beta \overline{\zeta}^\alpha d \sigma(\zeta) = \delta_{\alpha \beta} \frac{(d-1)! \alpha!}{(d-1 + |\alpha|)!},
  \end{equation*}
  so $\Phi(z^\beta) = a^\beta$, as desired.
\end{proof}

Alternatively, in the case of a tuple $T= (T_1,\ldots,T_d)$ of commuting Hilbert space operators,
it is possible to define the holomorphic functional calculus $\Phi$ of Theorem \ref{thm:fc_elementary} with
the help of convergent power series. This uses one inequality of the multivariable spectral radius formula \cite[Theorem 1]{MS92}, namely
\begin{equation*}
  \limsup_{n \to \infty} \Big\| \sum_{|\alpha| = n} \binom{n}{\alpha} (T^*)^\alpha T^\alpha \Big\|^{1/2n} \le \sup \{ |\lambda| : \lambda \in \sigma(T) \},
\end{equation*}
which can be proved in an elementary fashion. Using this, one shows that if $\sigma(T) \subset \mathbb{B}_d$,
then for each $f \in H^2_d$ with homogeneous expansion $f = \sum_{n=0}^\infty f_n$, the series
$\sum_{n=0}^\infty f_n(T)$ converges absolutely, from which the holomorphic functional calculus
can be easily deduced. We omit the details.

As is customary, we usually write $f(a)$ for $\Phi(f)$ in the setting of Theorem \ref{thm:fc_elementary}.
The superposition principle for the functional calculus in our particular setting can also be obtained
by elementary means.

\begin{prop}
  \label{prop:superposition}
  Let $\mathfrak{B}$ be a commutative unital Banach algebra, let $a \in \mathfrak{B}^d$
  and let $B$ be an open ball containing the joint spectrum $\sigma_{\mathfrak{B}}(a)$.
  Let $f = (f_1,\ldots,f_k) \in \mathcal{O}(B)^k$ and write $f(a) = (f_1(a),\ldots,f_k(a))$.
  Then:
  \begin{enumerate}[label=\normalfont{(\alph*)}]
    \item $\sigma_{\mathfrak{B}}(f(a)) = f(\sigma_{\mathfrak{B}}(a))$.
    \item If $g$ is a holomorphic function on an open ball containing $f(B)$, then $g(f(a)) = (g \circ f)(a)$.
  \end{enumerate}
\end{prop}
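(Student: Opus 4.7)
The plan for part (a) is to show that characters of $\mathfrak{B}$ commute with the holomorphic functional calculus $\Phi\colon \mathcal{O}(B) \to \mathfrak{B}$ supplied by Theorem \ref{thm:fc_elementary}. Concretely, I would fix $\chi \in \Delta(\mathfrak{B})$ and note that, since $\chi(a) := (\chi(a_1), \ldots, \chi(a_d)) \in \sigma_{\mathfrak{B}}(a) \subset B$, both $\chi \circ \Phi$ and the point evaluation $\mathrm{ev}_{\chi(a)}$ are continuous unital algebra homomorphisms $\mathcal{O}(B) \to \mathbb{C}$ which agree on polynomials (sending $p$ to $p(\chi(a))$). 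The uniqueness clause of Theorem \ref{thm:fc_elementary} (which comes from density of polynomials in $\mathcal{O}(B)$) then forces them to coincide on all of $\mathcal{O}(B)$, yielding $\chi(f_i(a)) = f_i(\chi(a))$ for each $i$. Letting $\chi$ range over $\Delta(\mathfrak{B})$ gives $\sigma_{\mathfrak{B}}(f(a)) = f(\sigma_{\mathfrak{B}}(a))$.

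For part (b), let $B'$ be the open ball on which $g$ is defined, so $f(B) \subset B'$. By part (a), $\sigma_{\mathfrak{B}}(f(a)) = f(\sigma_{\mathfrak{B}}(a)) \subset B'$, so Theorem \ref{thm:fc_elementary} produces a continuous unital homomorphism $\Psi_1\colon \mathcal{O}(B') \to \mathfrak{B}$ sending $h \mapsto h(f(a))$. I would then introduce a second continuous unital homomorphism $\Psi_2\colon \mathcal{O}(B') \to \mathfrak{B}$ by $\Psi_2(h) = (h \circ f)(a)$; its continuity follows from the fact that each compact $K \subset B$ is mapped by the continuous map $f$ into a compact set $f(K) \subset f(B) \subset B'$, so $h \mapsto h \circ f$ is continuous $\mathcal{O}(B') \to \mathcal{O}(B)$, and $\Psi_2$ is this map followed by $\Phi$. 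On a monomial $z^\beta$ both maps return $f_1(a)^{\beta_1} \cdots f_k(a)^{\beta_k}$, using that $\Phi$ is a homomorphism. Density of polynomials in $\mathcal{O}(B')$ and continuity of $\Psi_1, \Psi_2$ then force the two to coincide, giving $g(f(a)) = (g \circ f)(a)$.

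The proof has essentially no analytic content beyond what is already packaged in Theorem \ref{thm:fc_elementary}: in both parts the work reduces to checking that two natural maps are continuous homomorphisms agreeing on polynomials and then invoking uniqueness. The only thing that needs care is the logical order in part (b), where one must use part (a) first to ensure that $\sigma_{\mathfrak{B}}(f(a)) \subset B'$ (so that $g(f(a))$ is even defined), and then verify continuity of $h \mapsto h \circ f$ (so that $\Psi_2$ is a continuous homomorphism). Neither step is substantive, so I do not anticipate a real obstacle.
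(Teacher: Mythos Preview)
Your proposal is correct and follows essentially the same approach as the paper: in both parts you reduce to checking agreement on polynomials and then invoke density, exactly as the paper's ``approximation argument'' does. You are simply more explicit about the continuity of $h \mapsto h \circ f$ and about using part (a) to ensure $g(f(a))$ is well defined, but these are details the paper leaves implicit rather than genuine differences in strategy.
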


\begin{proof}
  (a) Let $a = (a_1,\ldots,a_d)$. If $\chi$ is a character on $\mathfrak{B}$, then
  for every polnomial $p$, we have
  $\chi(p(a)) = p( \chi(a_1),\ldots,\chi(a_d))$,
  hence $\chi(f_j(a)) = f_j(\chi(a_1), \ldots, \chi(a_d))$ for $j = 1,\ldots,d$
  by an approximation argument. The definition of joint spectrum then yields $\sigma_{\mathfrak{B}}(f(a)) = f(\sigma_{\mathfrak{B}}(a))$.

  (b) Since the functional calculus for the tuple $a$ is a homomorphism,
  we have $p(f(a)) = (p \circ f)(a)$ for every polynomial $p$.
  The general case then follows from an approximation argument.
\end{proof}

\bibliographystyle{plain}
\bibliography{literature}

\begin{thebibliography}{10}

\bibitem{AM02}
Jim Agler and John~E. McCarthy.
\newblock {\em Pick interpolation and {H}ilbert function spaces}, volume~44 of
  {\em Graduate Studies in Mathematics}.
\newblock American Mathematical Society, Providence, RI, 2002.

\bibitem{AM16}
Jim Agler and John~E. McCarthy.
\newblock Aspects of non-commutative function theory.
\newblock {\em Concr. Oper.}, 3:15--24, 2016.

\bibitem{AHM+20a}
Alexandru Aleman, Michael Hartz, John~E. McCarthy, and Stefan Richter.
\newblock Multiplier tests and subhomogeneity of multiplier algebras.
\newblock {\em Doc. Math.}, to appear.
\newblock arXiv:2008.00981.

\bibitem{Arveson98}
William Arveson.
\newblock Subalgebras of {$C\sp *$}-algebras. {III}. {M}ultivariable operator
  theory.
\newblock {\em Acta Math.}, 181(2):159--228, 1998.

\bibitem{CH18}
Rapha\"el Clou\^atre and Michael Hartz.
\newblock Multiplier algebras of complete {N}evanlinna--{P}ick spaces:
  {D}ilations, boundary representations and hyperrigidity.
\newblock {\em J. Funct. Anal.}, 274(6):1690--1738, 2018.

\bibitem{Coh15}
David Cohen.
\newblock {\em Dilations of Commuting Matrices}.
\newblock Master's thesis, Ben-Gurion University. available on the arxiv
  preprint server, arXiv:1503.07334, 2015.

\bibitem{CM95}
Carl~C. Cowen and Barbara~D. MacCluer.
\newblock {\em Composition Operators on Spaces of Analytic Functions}.
\newblock Studies in Advanced Mathematics. CRC Press, Boca Raton, FL, 1995.

\bibitem{Doubtsov98}
Evgueni Doubtsov.
\newblock Minimal solutions of the {G}leason problem.
\newblock {\em Complex Variables Theory Appl.}, 36(1):27--35, 1998.

\bibitem{Dru78}
Stephen Drury.
\newblock A generalization of von {N}eumann's inequality to the complex ball.
\newblock {\em Proc. Amer. Math. Soc.}, 68(3):300--304, 1978.

\bibitem{ER00}
Edward~G. Effros and Zhong-Jin Ruan.
\newblock {\em Operator Spaces}, volume~23 of {\em London Mathematical Society
  Monographs. New Series}.
\newblock Clarendon Press, New York, 2000.

\bibitem{FangXia11}
Quanlei Fang and Jingbo Xia.
\newblock Multipliers and essential norm on the drury-arveson space.
\newblock {\em Proc. Amer. Math. Soc.}, 141:363--368, 2011.

\bibitem{Hartz17a}
Michael Hartz.
\newblock On the isomorphism problem for multiplier algebras of
  {N}evanlinna-{P}ick spaces.
\newblock {\em Canad. J. Math.}, 69(1):54--106, 2017.

\bibitem{Hartz20}
Michael Hartz.
\newblock Every complete {P}ick space satisfies the column-row property.
\newblock {\em arXiv:2005.09614}, 2020.

\bibitem{HO01}
John Holbrook and Matja{\v{z}} Omladi{\v{c}}.
\newblock Approximating commuting operators.
\newblock {\em Linear Algebra Appl.}, 327(1-3):131--149, 2001.

\bibitem{KVV14}
Dmitry~S. Kaliuzhnyi-Verbovetskyi and Victor Vinnikov.
\newblock {\em Foundations of Free Noncommutative Function Theory}, volume 199.
\newblock American Mathematical Society, 2014.

\bibitem{KZ18}
{\L}ukasz Kosi\'{n}ski and W{\l}odzimierz Zwonek.
\newblock Nevanlinna-{P}ick interpolation problem in the ball.
\newblock {\em Trans. Amer. Math. Soc.}, 370(6):3931--3947, 2018.

\bibitem{Mir98}
Maryam Mirzakhani.
\newblock A simple proof of a theorem of {S}chur.
\newblock {\em Amer. Math. Monthly}, 105(3):260--262, 1998.

\bibitem{MV93}
V.~M{\"u}ller and F.-H. Vasilescu.
\newblock Standard models for some commuting multioperators.
\newblock {\em Proc. Amer. Math. Soc.}, 117(4):979--989, 1993.

\bibitem{Mueller07}
Vladimir M{\"u}ller.
\newblock {\em Spectral Theory of Linear Operators and Spectral Systems in
  {B}anach Algebras}, volume 139 of {\em Operator Theory: Advances and
  Applications}.
\newblock Birkh\"auser Verlag, Basel, second edition, 2007.

\bibitem{MS92}
Vladim\'{\i}r M\"{u}ller and Andrzej So\l~tysiak.
\newblock Spectral radius formula for commuting {H}ilbert space operators.
\newblock {\em Studia Math.}, 103(3):329--333, 1992.

\bibitem{PR16}
Vern~I. Paulsen and Mrinal Raghupathi.
\newblock {\em An introduction to the theory of reproducing kernel {H}ilbert
  spaces}, volume 152 of {\em Cambridge Studies in Advanced Mathematics}.
\newblock Cambridge University Press, 2016.

\bibitem{Rudin08}
Walter Rudin.
\newblock {\em Function Theory in the Unit Ball of {$\mathbb C\sp n$}}.
\newblock Classics in Mathematics. Springer-Verlag, Berlin, 2008.
\newblock Reprint of the 1980 edition.

\bibitem{SalShaSha18}
Guy Salomon, Orr~Moshe Shalit, and Eli Shamovich.
\newblock Algebras of bounded noncommutative analytic functions on subvarieties
  of the noncommutative unit ball.
\newblock {\em Trans. Amer. Math. Soc.}, 370(12):8639--8690, 2018.

\bibitem{SalShaSha20}
Guy Salomon, Orr~Moshe Shalit, and Eli Shamovich.
\newblock Algebras of noncommutative functions on subvarieties of the
  noncommutative ball: The bounded and completely bounded isomorphism problem.
\newblock {\em J. Funct. Anal.}, 278(7):108427, 2020.

\bibitem{ShalitSurvey}
Orr~Moshe Shalit.
\newblock Operator theory and function theory in {Drury--Arveson} space and its
  quotients.
\newblock In Daniel Alpay, editor, {\em Handbook of Operator Theory}, pages
  1125--1180. Springer, Basel, 2015.

\end{thebibliography}
\end{document}